\documentclass[a4paper,reqno]{amsart}
\usepackage[utf8]{inputenc}
\usepackage{amsmath,enumitem}
\usepackage{booktabs,url}
\usepackage{amssymb,yhmath,mathrsfs,wasysym}
\usepackage{amsthm}
\usepackage{graphicx}
\usepackage{subfigure}
\usepackage[subfigure]{ccaption}
\usepackage{multicol}
\usepackage{tikz,pgf}
\usetikzlibrary{arrows,automata,shapes,decorations.pathmorphing,decorations.markings,fadings,patterns}

\captionnamefont{\footnotesize\bf}
\captiontitlefont{\footnotesize}
\renewcommand{\thesubfigure}{\thefigure} \makeatletter \renewcommand{\p@subfigure}{} \renewcommand{\@thesubfigure}{\bf Figure \thesubfigure:\hskip\subfiglabelskip} \makeatother

\theoremstyle{plain}
\newtheorem{theorem}{Theorem}

\newtheorem{lemma}{Lemma}

\theoremstyle{definition}

\newtheorem{exercise}{Exercise}

\newcommand{\np}[1]{\textbf{\textit{#1}}}
\newcommand{\De}[0]{\mathbb{D}}
\newcommand{\NN}{\mathbb{N}}

\newcommand{\RR}{\mathbb{R}}
\newcommand{\CC}[0]{\mathbb{C}}
\newcommand{\ie}[0]{\textnormal{i}}

\newcommand{\de}[0]{\mathrel{\mathop:}=}

\newcommand{\holC}[1]{\mathcal{O}\left(#1\right)}
\newcommand{\dif}[1]{\textnormal{d}#1}

\newcommand{\pr}[2]{\pi^{#1}\left(#2\right)}
\newcommand{\prd}[2]{\pi_{#1}\left(#2\right)}

\DeclareMathOperator{\inter}{Int}

\title{Elementary approach to the Hartogs extension theorem}
\author{Aleksander Simoni\v{c}}
\address{Faculty of Mathematics and Physics \\ University of Ljubljana \\ Jadranska 19 \\ 1000 Ljubljana \\ Slovenia}
\email{aleksander.simonic@student.fmf.uni-lj.si}
\subjclass[2010]{32D15,30E20}

\begin{document}

\begin{abstract}
In this paper we present a proof of Hartogs' extension theorem, following T.~Sobieszek's paper from 2003. Hartogs' theorem provides a large class of domains where holomorphic functions have analytic continuation to larger domains, and is ``a several complex variables theorem'' in nature because its conclusion is false in the complex plane. Sobieszek's proof is quite remarkable because he uses, stated in his paper without proofs, only higher-dimensional identity principle for holomorphic functions and Cauchy's integral formula for compact sets. We proved this two theorems here, making this exposition self-contained. The only background required is an undergraduate course in real and complex analysis and in point-set topology.
\end{abstract}

\maketitle
\thispagestyle{empty}

\section{Introduction}

In this paper we present an ``elementary'' proof of the following celebrated result, which many complex analysts consider the birth of multivariable complex analysis.

\begin{theorem}[Hartogs' Extension Theorem]
\label{thm:hartogs}
Let $\Omega$ be a bounded domain in $\CC^n$ where $n\geq2$ and $K\subset\Omega$ is a compact subset, such that $\Omega\setminus K$ is again a domain. Then every holomorphic function $f$ on $\Omega\setminus K$ has a unique holomorphic extension $\tilde{f}$ to $\Omega$, that is $\tilde{f}|_{\Omega\setminus K}=f$.
\end{theorem}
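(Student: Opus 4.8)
The plan is to separate the two assertions. \emph{Uniqueness} will be immediate from the $n$-dimensional identity principle: if $\tilde f_1,\tilde f_2\in\holC{\Omega}$ both restrict to $f$ on $\Omega\setminus K$, then $\tilde f_1-\tilde f_2$ vanishes on the nonempty open set $\Omega\setminus K$, hence on the connected set $\Omega$. All the work is in \emph{existence}.

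The engine for existence is a one-variable lemma, the ``Hartogs figure'' construction. Write $z=(z',z_n)\in\CC^{n-1}\times\CC$, let $P_0\subset P$ be concentric polydiscs in $\CC^{n-1}$ and $0<r<\rho<R$, and suppose $g$ is holomorphic on
\[
\big(P\times\{\,r<|z_n|<R\,\}\big)\ \cup\ \big(P_0\times\{\,|z_n|<R\,\}\big),
\]
a polydisc base crossed with an annulus, together with a thin central cap. Set
\begin{equation*}
\tilde g(z',z_n)\de\frac{1}{2\pi\ie}\int_{|\zeta|=\rho}\frac{g(z',\zeta)}{\zeta-z_n}\,\dif\zeta .
\end{equation*}
I would first check that $\tilde g$ is holomorphic on $P\times\{|z_n|<\rho\}$: holomorphy in $z_n$ is clear, holomorphy in $z'$ follows by differentiating under the integral sign since each point $(z',\zeta)$ with $z'\in P$, $|\zeta|=\rho$ lies in the domain of $g$, and the two combine to joint holomorphy. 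On the cap the one-variable Cauchy integral formula gives $\tilde g=g$, so $\tilde g$ and $g$ agree on a nonempty open subset of their common domain; the identity principle, applied in $z'$ for each fixed $z_n$ and then in $z_n$, then forces $\tilde g=g$ throughout a connected overlap. This is precisely the place where Cauchy's integral formula for compact sets enters: if the ``hole'' of $g$ in the $z_n$-plane over $P$ is an arbitrary compact set rather than a disc, one replaces the circle $|\zeta|=\rho$ by a cycle encircling that compact set, the formula guaranteeing that the cycle reproduces $g$ wherever $g$ happens to be holomorphic across the hole.

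The global statement would then be reduced to this lemma by a filling argument. After a translation, a unitary change of coordinates and a dilation one places a configuration as above inside $\Omega\setminus K$ whose associated full polydisc $P\times\{|z_n|<\rho\}$ genuinely pokes into $K$; the lemma produces a holomorphic function on $(\Omega\setminus K)\cup\big(P\times\{|z_n|<\rho\}\big)$ restricting to $f$, that is, a holomorphic extension of $f$ to $\Omega\setminus K_1$ with $K_1\subsetneq K$ compact and $\Omega\setminus K_1$ again a domain (enlarging a connected open set by a connected open set that overlaps it keeps it connected). To reach all of $\Omega$ I would argue with Zorn's lemma on the family of pairs $(K',f')$, $K'\subseteq K$ compact with $\Omega\setminus K'$ a domain and $f'\in\holC{\Omega\setminus K'}$ extending $f$, ordered by extension: a chain has an upper bound because the intersection of the corresponding nested compacta is compact and connected, the union of the nested domains is a domain, and the functions glue by the identity principle; a maximal element $(K_*,f_*)$ must have $K_*=\emptyset$, since otherwise the lemma applied at a boundary point of $K_*$ would strictly shrink $K_*$, contradicting maximality. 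Then $f_*\in\holC{\Omega}$ is the required extension.

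I expect the main obstacle to be the geometry of the reduction step: choosing coordinates and the radii so that the annular tube $P\times\{r<|z_n|<R\}$ and the cap $P_0\times\{|z_n|<R\}$ both actually lie in $\Omega\setminus K$ while the polydisc hull still meets $K$. This is delicate because $\Omega$ need not contain a single polydisc neighbourhood of $K$ (it may be thin, or dumbbell-shaped with $K$ filling it), so one cannot simply enclose $K$ in one polydisc; instead one must work locally near a suitably chosen boundary point of the current compactum, and then separately verify that the region on which $\tilde g$ is asked to coincide with $f$ is connected, so that the identity principle really delivers a consistent glued function. A secondary point is to make sure the filling does not stall before exhausting $K$ --- this is exactly what the Zorn/maximality formulation handles. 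Finally, the hypothesis $n\geq2$ is indispensable: it is the presence of the extra variable $z'$ (and the thin cap in its direction) that makes $\tilde g$ holomorphic in genuinely new directions, and for $n=1$ there is no such room --- in keeping with the theorem being false in $\CC$.
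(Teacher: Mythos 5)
Your uniqueness argument and your local lemma (extension across a Hartogs figure via the Cauchy integral over $|\zeta|=\rho$, with agreement forced first on the cap and then, fibrewise in $z'$, on the annular overlap) are sound, and this is a genuinely different route from the paper, which never shrinks $K$ step by step but instead builds the extension in one stroke: Cauchy's formula for compact sets produces a fibrewise integral over a polygonal cycle enclosing the compact fibre $A_i(z')$, Lemma \ref{lem:ind} makes that cycle locally uniform in the parameter $z'$, the two-cycle comparison \eqref{eq:cauchy.equiv} glues the local integrals, and coincidence with $f$ is propagated from the boundary of $\pr{i}{A}$ along paths using the identity principle. The gap in your proposal is the maximality step of the Zorn argument, and it is not a technicality you can defer: you assert that if $K_*\neq\emptyset$ then the lemma ``applied at a boundary point of $K_*$'' strictly shrinks $K_*$, but you never show that for an \emph{arbitrary} compact $K_*$ with $\Omega\setminus K_*$ connected there exist coordinates, polydiscs $P_0\subset P$ and radii $r<\rho<R$ such that the annular tube $P\times\{r<|z_n|<R\}$ \emph{and} the cap $P_0\times\{|z_n|<R\}$ lie in $\Omega\setminus K_*$ while $P\times\{|z_n|<\rho\}$ meets $K_*$. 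Near a boundary point of $K_*$ the fibres of $K_*$ over the $z'$-projection can fill the whole $z_n$-range in a complicated way, so no such configuration need be visible locally; this is exactly the step at which the classical ``pushing discs'' arguments break down (see the correction of the historical record in [Ran02], cited in the paper). Without it, the maximal element need not have $K_*=\emptyset$ and the induction stalls.

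A second, independent defect sits in the same step even when a Hartogs figure does exist: your lemma gives agreement of $\tilde g$ with $f$ only on the figure itself, whereas the filled polydisc $P\times\{|z_n|<\rho\}$ may meet $\Omega\setminus K_*$ in further points lying in the ``hole'' $P\times\{|z_n|\le r\}$ that are not connected to the figure inside the overlap. At such points the Cauchy integral over $|\zeta|=\rho$ defines \emph{some} holomorphic function, but there is no reason it equals $f$ there --- this is precisely the warning issued after Theorem \ref{thm:cauchy.compact}: an integral of this kind always produces a holomorphic function on the inside of the cycle, and if it automatically agreed with $f$ one could extend across any compact set in one variable, contradicting the Weierstrass example of Exercise \ref{ex:WML}. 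So your glued function on $(\Omega\setminus K_*)\cup\bigl(P\times\{|z_n|<\rho\}\bigr)$ may simply fail to be well defined. Repairing this requires exactly the kind of connectivity and two-cycle bookkeeping the paper carries out (the comparison \eqref{eq:cauchy.equiv} and the path-propagation from boundary points of $\pr{i}{A}$), so you should either import that machinery into your shrinking step or verify, as part of choosing the figure, that every component of the overlap reaches the region where agreement with $f$ is already known.
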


This theorem is often used to explain the difference between the theory of several complex variables and the one variable theory. Its standard proof needs preparation beyond the scope of an undergraduate course in complex analysis. Fortunately, Tomasz Sobieszek \cite{Sob} discovered a proof in which only a  higher-dimensional \emph{identity principle} (Theorem \ref{thm:id.princ}) and \emph{Cauchy's integral formula for compact sets} (Theorem \ref{thm:cauchy.compact}) are used. The latter theorem is closely related to \emph{Cauchy's integral formula for rectangles} (Theorem \ref{thm:cauchy.rect}), the only difference being that now the values come from a given compact subset of a domain in the complex plane. Unfortunately, Sobieszek's proof of Hartogs' theorem does not seem to be widely known among complex analysts. To the author's knowledge, the only source which contains such a sketch of the proof with reference to \cite{Sob} is Jarnicki and Pflug's textbook \cite[p.~58]{JP1}.

Throughout this paper we use the projection map $\pi^n\colon\CC^n\to\CC^{n-1}$ defined by $\pi^n(z_1,\ldots,z_n)\de(z_1,\ldots,z_{n-1})$. Immediate generalization is the map $\pi^i$ where we ``skip the $i$th component''. Of course, the notation for standard projection to the $i$th component is $\pi_i$. Sobieszek's theorem may be stated in the following form.

\begin{theorem}
\label{thm:gen.hart}
Let $\Omega$ be a domain in $\CC^n$ where $n\geq2$ and $A\subset\Omega$ is a subset, such that $\Omega\setminus A$ is again a domain. For $z'\in\pi^i(A)$ define the set
\begin{equation}
\label{eq:fiber}
A_i(z')\de\left\{z\in A\colon \pi^i(z)=z'\right\}.
\end{equation}
If $\pi^i\left(\Omega\right)\setminus\pi^i\left(A\right)$ is a nonempty open set and $A_i(z')$ is a compact set for every $z'\in\pi^i\left(A\right)$, then every holomorphic function $f$ on $\Omega\setminus A$ has a unique holomorphic extension $\tilde{f}$ to $\Omega$.
\end{theorem}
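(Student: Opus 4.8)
The plan is to reduce to the case $n=2$ and $i=n$ by a slicing argument, then build the extension fiberwise using the one-variable Cauchy integral formula for compact sets. Fix coordinates so that $i=n$, write $z=(z',z_n)$ with $z'\in\CC^{n-1}$, and set $D\de\pi^n(\Omega)$, $E\de\pi^n(A)$; by hypothesis $U\de D\setminus E$ is nonempty and open in $\CC^{n-1}$, and each fiber $A_n(z')$ is compact for $z'\in E$. For each $z'\in U$ the slice $\Omega_{z'}\de\{w\in\CC\colon (z',w)\in\Omega\}$ is an open subset of $\CC$ on which $w\mapsto f(z',w)$ is holomorphic, and for $z'\in D\setminus U$ the function $f$ is defined only off the compact set $A_n(z')$.

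\textbf{Step 1 (the candidate extension).} For $z'\in E$ pick a relatively compact open set $G_{z'}\Subset\Omega_{z'}$ containing the compact fiber $A_n(z')$, chosen so that the boundary $\gamma_{z'}\de\partial G_{z'}$ is a finite union of rectangles (or smooth curves) lying in $\Omega_{z'}\setminus A_n(z')$. Using Theorem~\ref{thm:cauchy.compact} (Cauchy's formula for compact sets) I would define, for $(z',w)$ with $w$ inside $\gamma_{z'}$,
\begin{equation}
\label{eq:candidate}
\tilde f(z',w)\de\frac{1}{2\pi\ie}\int_{\gamma_{z'}}\frac{f(z',\zeta)}{\zeta-w}\,\dif{\zeta}.
\end{equation}
The integrand is holomorphic in $(z',w)$ jointly (the parameter $z'$ enters only through $f(z',\cdot)$, which is holomorphic in $z'$ off $A$, and through the contour, which can be kept fixed locally in $z'$), so differentiation under the integral sign shows $\tilde f$ is holomorphic on the piece of $\Omega$ it covers. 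On the open set where $z'\in U$, the contour $\gamma_{z'}$ encircles nothing of $A$, so the classical Cauchy formula gives $\tilde f(z',w)=f(z',w)$ there; in particular $\tilde f$ agrees with $f$ on a nonempty open subset of each relevant component.

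\textbf{Step 2 (independence of the contour and global coherence).} The value in \eqref{eq:candidate} is independent of the choice of $G_{z'}$ and $\gamma_{z'}$, by the homotopy form of Cauchy's theorem applied to $f(z',\cdot)$ in the annular region between two such contours (both avoiding $A_n(z')$). This lets me patch the locally defined pieces into a single holomorphic function on an open neighborhood $V\supset A$ in $\Omega$, and on $V\setminus A$ it satisfies $\tilde f=f$ because locally we are in the situation of Step~1 with $w$ outside the fiber, where the Cauchy integral reproduces $f$. Thus $\tilde f$ and $f$ are two holomorphic functions on the domain $\Omega\setminus A$ that agree on the nonempty open set $V\setminus A$ (or rather on the overlap $(V\setminus A)\cap(\Omega\setminus A)$, which is open and nonempty since $U\neq\emptyset$); by the identity principle Theorem~\ref{thm:id.princ}, they agree on all of $\Omega\setminus A$. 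Hence glueing $f$ on $\Omega\setminus A$ with $\tilde f$ on $V$ yields a well-defined holomorphic $\tilde f$ on $\Omega=(\Omega\setminus A)\cup V$ extending $f$. Uniqueness is immediate from the identity principle, since $\Omega$ is a domain and any two extensions agree on $\Omega\setminus A$.

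\textbf{Main obstacle.} The delicate point is Step~1: making \eqref{eq:candidate} genuinely holomorphic \emph{jointly} in all $n$ variables near a point $(z_0',w_0)$ with $z_0'\in E$. One must choose the contour $\gamma$ uniformly for $z'$ in a small polydisc around $z_0'$ — i.e. show the compact fibers $A_n(z')$ vary "upper-semicontinuously" enough that a single $\gamma$ works locally — and then justify holomorphy in $z'$ of $\zeta\mapsto f(z',\zeta)$ along that fixed $\gamma\subset\Omega\setminus A$, which is where the hypothesis that $A_n(z')$ is compact (so that $\gamma$ can be squeezed past it) is essential. The compactness of the fibers is exactly what prevents the one-variable counterexamples and is the crux of why the theorem is "several-variable in nature."
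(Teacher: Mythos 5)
Your architecture (a fiberwise Cauchy integral over a contour surrounding the compact fiber, a semicontinuity statement allowing one contour to serve for all $z'$ near a base point, gluing, and the identity principle) is the same as the paper's, but your Step~2 has a genuine gap at exactly the point the paper identifies as the heart of the matter. You claim that on $V\setminus A$ your candidate $\tilde f$ equals $f$ ``because locally we are in the situation of Step~1 with $w$ outside the fiber, where the Cauchy integral reproduces $f$.'' This is false: for $z'\in\pr{i}{A}$ the slice function $f(z',\cdot)$ is holomorphic only on $\Omega_{z'}\setminus A_i(z')$, not on the whole region enclosed by your contour, so Theorem~\ref{thm:cauchy.compact} does not apply and the integral need not return $f(z',w)$ off the fiber. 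A one-variable example shows the failure: for $f(\zeta)=1/\zeta$ with fiber $\{0\}$ one has $\frac{1}{2\pi\ie}\oint_{|\zeta|=r}\frac{f(\zeta)}{\zeta-w}\dif{\zeta}=0\neq f(w)$ for $0<|w|<r$. Indeed, were your claim correct, every holomorphic function would extend through a compact set already in the plane, contradicting the Weierstrass-type examples discussed after Theorem~\ref{thm:cauchy.compact} and in Exercise~\ref{ex:WML}, and the hypothesis that $\pr{i}{\Omega}\setminus\pr{i}{A}$ is nonempty would never be used. The fallback you offer, applying Theorem~\ref{thm:id.princ} to $\tilde f$ and $f$ ``on the domain $\Omega\setminus A$,'' does not parse, since $\tilde f$ is defined only on $V$; what has to be proved is precisely $\tilde f=f$ on $V\setminus A$, and $V\setminus A$ need not be connected, so agreement on the slices over $\pr{i}{\Omega}\setminus\pr{i}{A}$ does not propagate for free.

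What is missing is the paper's ``third assertion'': the equality $\tilde f=f$ is first established only over points $z_b'$ in the boundary of $\pr{i}{A}$ in $\pr{i}{\Omega}$, where the set $\widehat{U}^b(z_b')$, consisting of the slices over $U(z_b')\setminus\pr{i}{A}$, lies entirely in $\Omega\setminus A$ and Theorem~\ref{thm:cauchy.compact} genuinely applies; it is then propagated to an arbitrary $z'\in\pr{i}{A}$ by joining $z'$ to such a boundary point by a path inside $\pr{i}{A}$, covering the path by finitely many overlapping neighborhoods $U(z_j')$, and applying the multidimensional identity principle step by step on the sets $\widehat{U}(z_j',z_{j+1}')\setminus A$. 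This is where the nonemptiness of $\pr{i}{\Omega}\setminus\pr{i}{A}$ and Theorem~\ref{thm:id.princ} carry the real weight, and none of it appears in your argument. Two smaller points: the uniform-contour statement you defer to your ``main obstacle'' paragraph is exactly Lemma~\ref{lem:ind} and needs its own compactness proof rather than a remark; and your contour-independence via the homotopy/homology form of Cauchy's theorem is legitimate but uses machinery the paper deliberately avoids, replacing it with the elementary argument leading to \eqref{eq:cauchy.equiv}, which is stated so as to cover the case, needed for gluing $f_{z_1'}$ and $f_{z_2'}$, in which $f$ is holomorphic only off the compact set.
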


\noindent Using some general topology, it is easy to see that Theorem \ref{thm:hartogs} is really a special case of Theorem \ref{thm:gen.hart}. Recall that a domain in $\CC^n$ is by definition an open and path-connected\footnote{A subset $X\subset\CC^n$ (or more generally topological space $X$) is path-connected if for arbitrary points $x,y\in X$ a continuous map $\gamma\colon[0,1]\to X$ exists such that $\gamma(0)=x$ and $\gamma(1)=y$. From now on, we just say connected, as it is standard in analysis.} subset. Therefore $A$ must be a closed subset of $\Omega$. The projection $\pi^i$ is an open and continuous map, which is easy to verify by using one possible topological basis of $\CC^n$ - polydiscs (see Section \ref{sec:id.principle}). Compact sets in $\CC^n$ (or $\RR^{2n}$) are by \emph{Heine-Borel's theorem} (see, e.~g., \cite[Theorem 2.41]{RudinP}) a closed and bounded subsets. Conclusion is that if $A$ is a compact set, then $\pr{i}{A}$ and $A_i(z')$ are compact for every $z'\in\pr{i}{A}$, providing a nonempty open set $\pr{i}{\Omega}\setminus\pr{i}{A}$. The conditions of Theorem \ref{thm:gen.hart} are thus satisfied.

We choose to present the proof of Theorem \ref{thm:gen.hart} rather than just the proof of Hartogs' extension theorem. This choice is based on the fact that there is no essential difference between the two proofs because the main techniques are the same. Also, there are some other benefits, for instance the set $A$ needs not to be compact. As an example we demonstrate in the next section that Hartogs' figures, important domains in $\CC^n$ defined by \eqref{eq:hart.dom}, have the extension property. We must say that unboundedness of a domain $\Omega$ in Sobieszek's theorem is not a generalization of Hartogs' extension theorem. We stated ``bounded version'' of Theorem \ref{thm:hartogs} because then it is equivalent to Bochner's extension theorem (Theorem \ref{thm:bochner}). But classical proof of Theorem \ref{thm:hartogs} is also valid for unbounded domains, compare proofs of \cite[Theorem 1.2.6]{Kr1} and \cite[Theorem 5.4.4]{SchSCV}.

In this article, we provide a proof of Theorem \ref{thm:gen.hart} with as few sets of techniques as possible. To accomplish this task, we give a proof of the multidimensional identity principle (Section \ref{sec:id.principle}) and a proof of Cauchy's integral formula for compact sets (Section \ref{sec:cauchy}) assuming one-dimensional identity principle and Cauchy's integral formula for rectangles to be known. We start with some historical comments on Theorem \ref{thm:hartogs} and some of its consequences, where we inspect Hartogs' original approach, and finish with Section \ref{sec:proof} where we present the proof.

\section{Motivation}
\label{sec:motivation}

Assuming that the reader is familiar with the definition of a holomorphic function of one complex variable, we start with a higher dimensional definition. There are many definitions, but probably the simplest one is where we demand holomorphicity in each variable. Let $U$ be an open subset in $\CC^n$ where $n\geq2$. The function $f\colon U\to\CC$ is \np{separately holomorphic in $z_1$} if for every $z_2,\ldots,z_n\in\CC$ the one-variable function
\[
   \CC\ni\zeta\mapsto f\left(\zeta,z_2,\ldots,z_n\right)
\]
is holomorphic on the set $\{\zeta\in\CC\colon (\zeta,z_2,\ldots,z_n)\in U\}$. The function $f$ is \np{separately holomorphic} if it is separately holomorphic in every variable. Finally, we say that $f$ is \np{holomorphic} if it is continuous and separately holomorphic\footnote{The hypothesis about continuity is superfluous, which is another important Hartogs' contribution. The proof is not simple, see, e.~g., ~\cite[pp.~107-110]{Kr1}.}. The common notation for a family of holomorphic functions on $U$ is $\holC{U}$. Write $z_i=x_i+\ie y_i$ where $x_i$ and $y_i$ are real variables. Remember the relations from the one-variable theory
\begin{equation}
\label{eq:basic}
f_{z_i}\de\frac{\partial f}{\partial z_i} = \frac{1}{2}\left(\frac{\partial f}{\partial x_i} - \ie\frac{\partial f}{\partial y_i}\right)
\;\; \textrm{and} \;\;
f_{\bar{z}_i}\de\frac{\partial f}{\partial \bar{z}_i} = \frac{1}{2}\left(\frac{\partial f}{\partial x_i} + \ie\frac{\partial f}{\partial y_i}\right)
\end{equation}
for $f$ being a continuously differentiable function on $U$. Then $f$ is holomorphic if and only if $f_{\bar{z}_i}\equiv0$ on $U$ for every $1\leq i\leq n$.

Theorem \ref{thm:hartogs} is attributed to a German mathematician \textbf{Friedrich M.~Hartogs} (1874--1943) and is also known as Hartogs' Kugelsatz or just Kugelsatz. This name comes from two German words, namely  ``die Kugel'' (i.~e., a ball) and ``der Satz'' (i.~e., a sentence, a theorem). It is difficult to trace where and when the name first appeared. Although it might have circulated among German mathematicians before (i.~e., in the first half of the 20th century), it surely appeared in the second edition of Behnke and Thullen's book \cite{BehThu} on p.~63 in the form equivalent to Theorem \ref{thm:hartogs}.

\begin{theorem}[Bochner's Extension Theorem]
\label{thm:bochner}
Every holomorphic function, defined in a connected neighbourhood of the boundary $\partial\Omega$ of a bounded domain $\Omega\subset\CC^n$, where $n\geq2$, has a unique holomorphic extension on $\Omega$.
\end{theorem}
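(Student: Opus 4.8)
The plan is to deduce Theorem~\ref{thm:bochner} from Sobieszek's Theorem~\ref{thm:gen.hart}; this is the natural route because that theorem, unlike Theorem~\ref{thm:hartogs}, imposes no boundedness assumption on the ambient domain, and its hypotheses turn out to be easy to check here. Given a bounded domain $\Omega\subset\CC^n$, a connected open neighbourhood $U$ of $\partial\Omega$, and $f\in\holC{U}$, I would set
\[
    G\de\Omega\cup U,\qquad K\de\overline{\Omega}\setminus U,
\]
and apply Theorem~\ref{thm:gen.hart} to the domain $G$ with exceptional set $A\de K$. The first step is topological bookkeeping. Since $\Omega$ is bounded and nonempty, $\partial\Omega$ is nonempty (otherwise the nonempty open set $\Omega$ would also be closed, forcing $\Omega=\CC^n$ against boundedness) and $\overline{\Omega}$ is compact by Heine--Borel. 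Because every neighbourhood of a boundary point meets $\Omega$, we have $\Omega\cap U\neq\emptyset$, so $G$, being the union of two connected open sets with nonempty intersection, is a domain. From $\partial\Omega\subset U$ we get $K\subset\overline{\Omega}\setminus\partial\Omega=\Omega\subset G$, while $K=\overline{\Omega}\cap(\CC^n\setminus U)$ is closed and bounded, hence a compact subset of $G$; if $K=\emptyset$ then $\overline{\Omega}\subset U$ and there is nothing to prove, so I may assume $K\neq\emptyset$. Finally $K\cap U=\emptyset$ yields $G\setminus K=(\Omega\setminus K)\cup U=(\Omega\cap U)\cup U=U$, which is a domain by hypothesis.

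Next I would check the fibre conditions of Theorem~\ref{thm:gen.hart} for, say, the index $i=1$. Since $\pi^1$ is open and continuous, $\pr{1}{G}$ is open, and since $K$ is compact, each fibre $K_1(z')=\{z\in K\colon\pi^1(z)=z'\}$ from \eqref{eq:fiber} is closed in $K$, hence compact, for every $z'\in\pr{1}{K}$. The one step that is not routine --- and the one I expect to be the main obstacle --- is showing that $\pr{1}{G}\setminus\pr{1}{K}$ is nonempty. I would argue topologically rather than by exhibiting a point. From $\overline{\Omega}=\Omega\cup\partial\Omega$ together with $\Omega\subset G$ and $\partial\Omega\subset U\subset G$ one obtains $\pr{1}{K}\subset\pr{1}{\overline{\Omega}}=\pr{1}{\Omega}\cup\pr{1}{\partial\Omega}\subset\pr{1}{G}$. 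Hence, were $\pr{1}{G}\setminus\pr{1}{K}$ empty, we would have $\pr{1}{G}=\pr{1}{K}$, a set that is simultaneously nonempty, open (the image of the open set $G$ under the open map $\pi^1$) and compact (the continuous image of the compact set $K$); but $\CC^{n-1}$ is connected and noncompact for $n\geq2$, so it has no nonempty proper subset that is at once open and closed, a contradiction. This establishes all the hypotheses of Theorem~\ref{thm:gen.hart}.

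Theorem~\ref{thm:gen.hart} then produces a unique $\tilde{f}\in\holC{G}$ with $\tilde{f}|_{G\setminus K}=\tilde{f}|_{U}=f$. In particular $\tilde{f}$ is holomorphic on $\Omega\cup U\supseteq\Omega$ and extends $f$, and $\tilde{f}|_{\Omega}\in\holC{\Omega}$ is the required extension, agreeing with $f$ on the nonempty open set $\Omega\cap U$. For uniqueness I would invoke the identity principle (Theorem~\ref{thm:id.princ}): any $g\in\holC{\Omega}$ that also agrees with $f$ on $\Omega\cap U$ then agrees with $\tilde{f}|_{\Omega}$ on that nonempty open set, hence on all of the connected domain $\Omega$. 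This completes the proof, and I would close with a brief remark on the converse reduction underlying the equivalence mentioned in the introduction: to recover the bounded case of Theorem~\ref{thm:hartogs} from Theorem~\ref{thm:bochner} one thickens the compact set $K$ to a bounded connected open set $\Omega_1$ with $K\subset\Omega_1$ and $\overline{\Omega_1}\subset\Omega$, so that $\Omega\setminus K$ is a connected open neighbourhood of $\partial\Omega_1$; applying Theorem~\ref{thm:bochner} on $\Omega_1$ and patching with $f$ on $\Omega\setminus K$ extends $f$ to $(\Omega\setminus K)\cup\Omega_1=\Omega$.
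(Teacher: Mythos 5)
Your proof is correct, but it takes a genuinely different route from the paper's. The paper deduces Bochner's theorem directly from Theorem~\ref{thm:hartogs}: it sets $K\de\Omega\setminus U$, notes that $K$ is compact in $\Omega$, and asserts that $\Omega\setminus K=\Omega\cap U$ is a domain, so the extension exists on $\Omega$. You instead enlarge the ambient domain to $G\de\Omega\cup U$ and take the exceptional set $A\de\overline{\Omega}\setminus U$, so that $G\setminus A=U$, and then invoke Theorem~\ref{thm:gen.hart} (which is legitimate, since its proof in Section~\ref{sec:proof} does not depend on this statement). The payoff of your version is real: the delicate point in the paper's argument is the unproved claim that $\Omega\cap U$ is connected, whereas in your setup the complement $G\setminus A=U$ is connected \emph{by hypothesis}, and you even get the slightly stronger conclusion that the extension is holomorphic on all of $\Omega\cup U$ and agrees with $f$ on all of $U$, not just on $\Omega\cap U$. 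The price is that you must verify the hypotheses of Theorem~\ref{thm:gen.hart} for a compact exceptional set, but this is essentially the same verification the paper performs in the Introduction when reducing Theorem~\ref{thm:hartogs} to Theorem~\ref{thm:gen.hart}; your clopen/noncompactness argument for the nonemptiness of $\pr{1}{G}\setminus\pr{1}{K}$ spells out what the paper leaves implicit, and is correct. Two small remarks: you should state explicitly that $\pr{1}{K}$, being compact, is closed, so that $\pr{1}{G}\setminus\pr{1}{K}$ is \emph{open} as Theorem~\ref{thm:gen.hart} requires (you only argue nonemptiness); and your closing paragraph on recovering Theorem~\ref{thm:hartogs} from Theorem~\ref{thm:bochner} is an aside not needed for the statement being proved --- it mirrors the paper's own sketch of the converse and carries the same patching subtleties.
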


\noindent Why equivalent? Take a set $K\de\Omega\setminus U$ where $U\supset\partial\Omega$ is connected neighborhood of that function. Then $K$ is a compact in $\Omega$ and $\Omega\setminus K=\Omega\cap U$ is a domain. Therefore, Hartogs' theorem implies Bochner's theorem. On the other hand, given a compact set $K\subset\Omega$, there exists a neighbourhood $U\subset\Omega$ of $K$ such that $\partial U\subset\Omega\setminus K$. Therefore, Bochner's extension theorem for $U$ implies Hartogs' extension theorem for $\Omega$. For a proof of Theorem \ref{thm:bochner}, not relying on Theorem \ref{thm:hartogs}, see \cite{Range} or \cite{Range2}. The first reference is an expository paper on multidimensional complex analysis and the second reference discusses theorems connected to Bochner's theorem.

This interpretation of Hartogs' extension theorem is even more consistent with Hartogs' original approach. Given two bounded domains $\Omega_1,\Omega_2\subset\CC$ and a function $f$ defined in $\Omega_1\times\Omega_2$ and holomorphic on an open subset $U\subset\CC^2$ containing $\partial\left(\Omega_1\times\Omega_2\right)$, Hartogs proved in 1906 (see his paper \cite[p.~231]{Hart}) that $f$ is also holomorphic in $\Omega_1\times\Omega_2$. For $r>0$ define
\[
   \De^2(r)=\{(z_1,z_2)\in\CC^2\colon|z_1|<r,|z_2|<r\}.
\]
Today, Hartogs' idea is usually demonstrated in the case of a unit bidisc $\De^2\de\De^2(1)=\De\times\De$. Assume that holomorphic function $f$ is defined on the open set
\[
   U\de \De^2(1+\varepsilon)\setminus \overline{\De}^2(1-\varepsilon)
\]
for a number $\varepsilon>0$ (see Figure \ref{fig:hart1}). Because $\overline{\De}^2(1-\varepsilon)$ is a compact set in $\De^2(1+\varepsilon)$ and $U$ is connected, it follows from Theorem \ref{thm:hartogs} that every $f\in\holC{U}$ has a unique holomorphic extension $\tilde{f}$ to the domain $\De^2(1+\varepsilon)$.

\begin{figure}[h!]
\subfigure[][The open set $U$ indicated by three shades of grey contains a boundary $\partial\De^2$ of the unit bidisc $\De^2$. The domain $\De^2\cap U$ is indicated by darker shades of grey while $U'$ is the most dark grey.]
{\label{fig:hart1}
\begin{tikzpicture}[]
\clip (-1.1,-0.5) rectangle (4.7,3.8);
\fill[fill=gray,fill opacity=0.1] (0,0)--(2.9,0)--(2.9,2.9)--(0,2.9)--(0,0);
\fill[fill=gray,fill opacity=0.2] (0,0)--(2.3,0)--(2.3,2.3)--(0,2.3)--(0,0);
\fill[fill=gray,fill opacity=0.3] (1.7,0)--(2.3,0)--(2.3,2.3)--(1.7,2.3)--cycle;
\fill[color=white,fill=white] (0,0)--(1.7,0)--(1.7,1.7)--(0,1.7)--(0,0);
\draw[dash pattern=on 1pt off 1pt] (1.7,0)--(1.7,1.7)--(0,1.7);
\draw[dash pattern=on 1pt off 1pt] (1.7,1.7)--(1.7,2.3);
\draw[line width=1pt] (2.3,0)--(2.3,2.3)--(0,2.3);
\draw[dash pattern=on 1pt off 1pt] (2.9,0)--(2.9,2.9)--(0,2.9);
\draw (0,0)--(3.6,0);
\draw (0,0)--(0,3.6);
\draw (-0.1,1.7)--(0.1,1.7);
\draw (-0.1,2.3)--(0.1,2.3);
\draw (-0.1,2.9)--(0.1,2.9);
\draw (1.7,-0.1)--(1.7,0.1);
\draw (2.3,-0.1)--(2.3,0.1);
\draw (2.9,-0.1)--(2.9,0.1);
\begin{scriptsize}
\draw (-0.2,-0.2) node[] {$0$};
\draw (3.6,0.3) node[] {$|z_1|$};
\draw (0.3,3.6) node[] {$|z_2|$};
\draw (2.3,-0.3) node[] {$1$};
\draw (3.15,-0.3) node[] {$1+\varepsilon$};
\draw (1.45,-0.3) node[] {$1-\varepsilon$};
\draw (-0.3,2.3) node[] {$1$};
\draw (-0.5,2.9) node[] {$1+\varepsilon$};
\draw (-0.5,1.7) node[] {$1-\varepsilon$};
\draw (0.85,0.85) node[] {$\De^2(1-\varepsilon)$};
\draw (2,1.2) node[] {$U'$};
\draw (1.4,2.49) node[] {$\partial\De^2$};
\end{scriptsize}
\end{tikzpicture}
}
\hfill
\subfigure[][Complex two-dimensional Hartogs' figure $\mathcal{H}(q_1,q_2)$.]
{\setcounter{figure}{2} \label{fig:hart2}
\begin{tikzpicture}[]
\clip (-1.1,-0.5) rectangle (4.7,3.8);
\fill[color=blue,fill=gray,fill opacity=0.1] (0,0)--(2.9,0)--(2.9,2.9)--(1.7,2.9)--(1.7,1.9)--(0,1.9)--(0,0);
\draw[dash pattern=on 1pt off 1pt] (2.9,0)--(2.9,2.9);
\draw[dash pattern=on 1pt off 1pt] (2.9,2.9)--(1.7,2.9);
\draw[dash pattern=on 1pt off 1pt] (1.7,2.9)--(1.7,1.9);
\draw[dash pattern=on 1pt off 1pt] (1.7,1.9)--(0,1.9);
\draw (-0.1,2.9)--(0.1,2.9);
\draw (-0.1,1.9)--(0.1,1.9);
\draw (1.7,-0.1)--(1.7,0.1);
\draw (2.9,-0.1)--(2.9,0.1);
\draw (0,0)--(3.6,0);
\draw (0,0)--(0,3.6);
\begin{scriptsize}
\draw (-0.2,-0.2) node[] {$0$};
\draw (3.6,0.3) node[] {$|z_1|$};
\draw (0.3,3.6) node[] {$|z_2|$};
\draw (2.9,-0.3) node[] {$1$};
\draw (-0.3,2.9) node[] {$1$};
\draw (-0.3,1.9) node[] {$q_2$};
\draw (1.7,-0.3) node[] {$q_1$};
\draw (1.5,1) node[] {$\mathcal{H}(q_1,q_2)$};
\end{scriptsize}
\end{tikzpicture}
}
\end{figure}

\noindent The translated title of Hartogs' paper ``Some conclusions from Cauchy's integral formula for functions of several variables'' suggests that his main technique was Cauchy's integral representation formula. He observed that the function
\begin{equation}
\label{eq:hart}
F(z_1,z_2)\de \frac{1}{2\pi\ie} \oint_{\partial \De} \frac{f(z_1,\zeta)}{\zeta-z_2}\dif{\zeta}
\end{equation}
is holomorphic in whole $\De^2$ (see Figure \ref{fig:hart1}). Observe that for any $\zeta\in\partial\De$ the function $h$ on $\De^2$ defined by
\[
   h(z_1,z_2) \de \frac{f(z_1,\zeta)}{\zeta-z_2}
\]
is holomorphic, implying $h_{\bar{z}_1}=h_{\bar{z}_2}=0$. We could just move the derivation of $F$ to $\bar{z}_1$ and $\bar{z}_2$ under the integral, and simply obtain $F_{\bar{z}_1}=F_{\bar{z}_2}=0$. This process is correct since real analysis shows that the derivative of \emph{the integral with parameter} (for instance, the integral \eqref{eq:hart}) is an integral of the derivative if the derivative of the function is continuous \cite[Theorem 9.42]{RudinP}. This theorem is applicable after decomposing the function $h$ into real and imaginary parts and applying fundamental equations \eqref{eq:basic}. We thus obtain holomorphicity of $F$. Moreover, Cauchy's integral formula for a disc (just replace a rectangle with a disc in Theorem \ref{thm:cauchy.rect}) guarantees the equivalence of $F$ and $f$ on
\[
   U' \de \{z\in\CC\colon 1-\varepsilon<|z|<1\}\times\De.
\]
Therefore, the holomorphic function $F$ is the desired extension of $f$. This phenomenon of a simultaneous extension shocked mathematicians of those times who work on that problems. They realized that the theory of several complex variables is not such a straightforward generalization of the one variable theory since they could construct a holomorphic function without a holomorphic extension for any planar domain through any boundary point. This new discovery became one of the main research areas in this field, reaching its climax by characterizing the so-called domains of holomorphy.

\begin{exercise}
\label{ex:WML}
The well-known example of a holomorphic function on a disc where every boundary point is singular is given by the infinite series
\[
   \sum_{n=0}^{\infty}z^{2^n} = \sum_{n=0}^{N-1}z^{2^n} + \sum_{n=0}^{\infty}\left(z^{2^N}\right)^{2^n}.
\]
To prove this fact use the above decomposition and rotations $z\mapsto z\exp\left(2^{1-N}k\pi\ie\right)$ where $k$ are integers. The construction of such a function on arbitrary domain is carried out through \emph{Weierstrass' interpolation theorem}, which asserts the existence of a holomorphic function on a domain $\Omega$ with zeros on a given subset of $\Omega$ without accumulation points there, see \cite[Theorem 15.11]{Rud}. With help of a distance function and rational points in $\CC$ construct a subset of $\Omega$ which accumulation points \emph{are} $\partial\Omega$, or take a look at \cite[Corollary 3.3.3]{BerGay1991}.
\end{exercise}


Hartogs' domains, also called Hartogs' figures, are central to this theory. These domains are defined as
\begin{equation}
\label{eq:hart.dom}
\mathcal{H}(q_1,\ldots,q_n) = \left\{z\in\De^n \colon |z_1|>q_1 \; \textrm{or} \; |z_i|<q_i \; \textrm{for all} \; i\in\{2,\ldots,n\}\right\}
\end{equation}
where $\De^n$ is a natural generalization of $\De^2$ to $n$ dimensions, and $q_1,\ldots,q_n$ are numbers from the unit interval. In $\CC^2$ they have fine pictorial visualizations (see Figure \ref{fig:hart2}). It is easy to observe that $\mathcal{H}(q_1,\ldots,q_n) = \De^n \setminus A$ where
\[
   A \de \{z\in\CC\colon|z|\leq q_1\}\times\left\{\De^{n-1}\setminus\{z\in\CC^{n-1}\colon |z_2|<q_2,\ldots,|z_n|<q_n\}\right\}.
\]
It is tempting to use Theorem \ref{thm:hartogs} with $\Omega=\De^n$ and $K=A$. But the set $A$ is not compact. Applying a similar principle as before, we can simply obtain a holomorphic extension to $\De^2$ in the case of a complex two-dimensional Hartogs' figures.
The extension property for Hartogs' figures seems plausible. Fortunately, Theorem \ref{thm:gen.hart} comes to our rescue. The right choice for $i$ is $1$ because we have
\[
   \pr{1}{A} = \De^{n-1}\setminus\{z\in\CC^{n-1}\colon |z_2|<q_2,\ldots,|z_n|<q_n\}\neq\De^{n-1}=\pr{1}{\De^n},
\]
providing that $\pr{1}{\De^n}\setminus\pr{1}{A}$ is a nonempty open set and for every $z'\in\pr{1}{A}$ set $A_1(z')$ is a closed disc with a radius $q_1$. This set is clearly compact, thus satisfying the assumptions of Theorem \ref{thm:gen.hart}.

Let us highlight another consequence that Kugelsatz leads to. Namely, multivariable holomorphic functions do not have isolated zeros and singularities, which is completely false in one dimension. Indeed, if $f\in\holC{\Omega}$ has an isolated zero $a\in\Omega$, then $g\de1/f$ is holomorphic on $\Omega\setminus\{a\}$. But then, there is an extension $\tilde{g}\in\holC{\Omega}$ of the function $g$, which is impossible.

\begin{exercise}
In the same lines as before you can obtain that for $f\in\holC{\Omega}$, where $\Omega\subseteq\CC^n$ is a domain and $n\geq2$, the zero set $N(f)\de\left\{z\in\Omega\colon f(z)=0\right\}$ \emph{is not compact}. Excluding the trivial case $f\equiv0$, everything you need to verify is connectedness of the nonempty complement $\Omega\setminus N(f)$ in order to apply Kugelsatz. To do this, take arbitrary points $z$ and $w$ from that complement and define the \emph{complex line} as $\ell_{z,w}\de\left\{z+(w-z)\zeta\colon \zeta\in\CC\right\}$ through this two points. Observe that $\De^n(z,r)\cap\ell_{z,w}$ is some disc in the complex plane where $\De^n(z,r)$ is an arbitrary polydisc defined by \eqref{eq:polydisc}. What tells you the one-dimensional identity principle about the set $\De^n(z,r)\cap\ell_{z,w}\cap N(f)$? How is this useful for our purpose?
\end{exercise}

To conclude this section, let us mention that the statement of Hartogs' extension theorem in the case of a punctured bidisc $\De^2\setminus\{(0,0)\}$ was known to \textbf{Adolph Hurwitz} (1859-1919) nearly decade before. It appeared in his lecture at the first International Congress of Mathematicians in Z\"{u}rich in 1897 where he was also involved into organisation of that congress. Interestingly, $23$-old Hartogs, student of mathematics at University of Berlin those days, was a participant.

\section{The identity principle}
\label{sec:id.principle}

It is reasonable to expect that the identity principle also applies to holomorphic functions of several variables. Although this is true, it has to be assumed that the set of coincidences of two holomorphic functions is open, instead of having cluster points only, which is the case in the one variable theory. The basic counter-example is (globally a nonzero) a holomorphic function $f(z_1,z_2)=z_1(z_1-z_2)$ in $\CC^2$, which is identically zero on a closed set $\{0\}\times\CC\subset\CC^2$. This short section is entirely devoted to proving this basic theorem, relying only on the familiar one-dimensional identity principle.

Let $U_1\subseteq\CC^n$ and $U_2\subseteq\CC^m$ be two open subsets. We say that $f\colon U_1\to U_2$, $f(z)=(f_1(z),\ldots,f_m(z))$ where $z=(z_1,\ldots,z_n)$ is a holomorphic map if each $f_i(z)$ is a holomorphic function.

To generalize the notion of a unit polydisc $\De^n$, it is common to denote by $P^n$ a polydisc
\begin{equation}
\label{eq:polydisc}
\De^n(x,r)\de\left\{z\in\CC^n\colon \|z-x\|_{\infty}<r\right\}
\end{equation}
where $\|z\|_{\infty}=\max_{1\leq i\leq n}|z_i|$ for $z\in\CC^n$. Every polydisc $P^n$ is biholomorphic to the unit polydisc $\De^n=\De^n(0,1)$. Indeed,
\[
   (z_1,\ldots,z_n)\mapsto(rz_1+x_1,\ldots,rz_n+x_n)
\]
is a bijective holomorphic map with a  holomorphic inverse between $\De^n$ and $P^n$.

\begin{theorem}
\label{thm:id.princ}
Assume that two holomorphic functions on a domain $\Omega\subseteq\CC^n$ coincide on a nonempty open subset $U\subseteq\Omega$. Then, they are the same on the whole of $\Omega$.
\end{theorem}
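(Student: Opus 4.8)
The plan is to pass to the difference $g\de f-h$, which lies in $\holC{\Omega}$ and vanishes on the nonempty open set $U$, and to show that $g\equiv0$ on $\Omega$. Following the usual clopen argument, I would introduce
\[
   V\de\left\{z\in\Omega\colon g \text{ vanishes on some neighbourhood of }z\right\},
\]
that is, the interior of the zero set of $g$. By construction $V$ is open, and it is nonempty because $U\subseteq V$. Since $\Omega$ is connected, it suffices to prove that $V$ is also closed in $\Omega$; then $V=\Omega$ and hence $f=h$ everywhere.

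The heart of the matter is a local statement: \emph{if $g$ is holomorphic on a polydisc $P$ and vanishes on a nonempty open subset $W\subseteq P$, then $g\equiv0$ on $P$.} To prove it I would first shrink $W$ to a small polydisc $\De^n(a,\varepsilon)\subseteq W$, then transport everything to the unit polydisc $\De^n$ via the biholomorphism from \eqref{eq:polydisc}, and finally propagate the vanishing one coordinate at a time using the one-dimensional identity principle. Concretely: fixing $(z_2,\ldots,z_n)$ with $|z_j-a_j|<\varepsilon$ for $j\ge2$, the slice $\De\ni z_1\mapsto g(z_1,z_2,\ldots,z_n)$ is holomorphic and vanishes on the disc $|z_1-a_1|<\varepsilon$, hence vanishes identically on $\De$; this enlarges the vanishing set to $\De$ times the small $(n-1)$-polydisc around $(a_2,\ldots,a_n)$. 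Repeating the argument for the second coordinate, then the third, and so on, after $n$ steps gives $g\equiv0$ on $\De^n$, and therefore on $P$.

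With this local lemma, closedness of $V$ in $\Omega$ is immediate: if $a\in\Omega$ lies in the closure of $V$, choose a polydisc $P=\De^n(a,r)\subseteq\Omega$; since $a\in\overline{V}$, the open set $V\cap P$ is nonempty, $g$ vanishes on it, so the lemma forces $g\equiv0$ on $P$, whence $P\subseteq V$ and in particular $a\in V$. Thus $V$ is nonempty and clopen in the connected set $\Omega$, so $V=\Omega$.

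I expect the coordinate-by-coordinate propagation inside a polydisc to be the main (indeed the only nontrivial) obstacle: it is precisely there that the hypothesis ``$U$ open'' is used in an essential way rather than the mere existence of a cluster point, as the example $z_1(z_1-z_2)$ shows must be the case, and it is the only place where the one-variable theory enters. One may also note that the slicing step uses only separate holomorphicity of $g$, continuity playing no role in it; everything outside the local lemma is the routine connectedness argument.
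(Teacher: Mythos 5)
Your proposal is correct and follows essentially the same route as the paper: the heart in both cases is the polydisc lemma, proved by slicing one coordinate at a time and invoking the one-dimensional identity principle, after reducing to the unit polydisc. The only difference is cosmetic --- you globalize via the clopen set $V$ of points where the difference vanishes locally, while the paper covers a path from $U$ to an arbitrary point by a finite chain of overlapping polydiscs; both are routine connectedness arguments and equally valid here.
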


\begin{proof}
Let $f_1,f_2\in\holC{\Omega}$ be two functions such that $f_1|_U\equiv f_2|_U$ for an open subset $U$ of a domain $\Omega\subseteq\CC^n$. Set $F\de f_1-f_2$. Then $F|_U\equiv0$. We must demonstrate that $F$ is identically zero on $\Omega$.

First, let us prove this for a bidisc $\De^2$. Choose an arbitrary point $(z_1,z_2)\in U$. By definition, $F$ is a holomorphic function on the set
\begin{equation}
\label{eq:slice}
\{\zeta\in\CC\colon(z_1,\zeta)\in\De^2\}=\De.
\end{equation}
By assumption, $F$ is identically zero on the open subset $\{\zeta\in\CC\colon(z_1,\zeta)\in U\}\subseteq\De$. The identity principle in one complex variable tells us $F|_{\{z_1\}\times\De}\equiv0$. Therefore, $F$ is identically zero on open subset $\prd{1}{U}\times\De\subseteq\De^2$. But from this it follows $F|_{\De\times\{\zeta\}}\equiv0$ for every $\zeta\in\De$, again by one-dimensional identity principle. Thus $F$ is zero on $\De^2$.

Proof for a polydisc $\De^n$ goes by induction on $n$; assume that we know that identity principle holds for $\De^{n-1}$ where $n\geq3$ and by the previous paragraph this was proved for $n=3$. Choose an arbitrary point $(z_1,\ldots,z_n)\in U\subseteq\De^n$. By similar arguments as before we obtain $F|_{\{(z_1,\ldots,z_{n-1})\}\times\De}\equiv0$ and $F$ is identically zero on open subset $\pr{n}{U}\times\De\subseteq\De^{n-1}\times\De$. By the induction argument we have $F|_{\De^{n-1}\times\{\zeta\}}\equiv0$ for every $\zeta\in\De$. Proof for polydiscs is complete.

In order to prove general case, choose an arbitrary point $z\in\Omega\setminus\overline{U}$. We want to show that $F(z)=0$. Because $\Omega$ is connected, there exists a path $\gamma$ with the initial point $w\in U$ and the end point $z$. This path can be covered by finitely many polydiscs $P_1,\ldots,P_k$, where $P_i\cap P_{i+1}\not=\emptyset$ for $i\in\{1,\ldots,k-1\}$, $P_1\subset U$ and $z\in P_k$. From the previous paragraph, $F|_{P_i}\equiv0$ for every $i\in\{1,\ldots,k\}$ and also $F(z)=0$.
\end{proof}

The standard proof of Theorem \ref{thm:id.princ} goes through multidimensional Taylor series, see \cite[Proposition 1.7.10]{JP1}. Different approach, presented in the textbook \cite[\S1.2.2]{SchSCV}, uses convexity of polydiscs. Observe that the reduction to polydiscs in our proof is crucial since the set \eqref{eq:slice} may not be connected in the general setting. This issue prevents us from using the one-dimensional identity principle directly.

\section{Cauchy's integral formula for compact sets}
\label{sec:cauchy}

In this section, we prove Cauchy's integral formula for compact sets (Theorem \ref{thm:cauchy.compact}) using Cauchy's integral formula for rectangles (Theorem \ref{thm:cauchy.rect}). Our exposition of the proof follows the ideas from Remmert's book \cite{Rem}. It should be noted that much of the material presented here is irrelevant when someone knows the general form of Cauchy's integral representation theorem in the language of index (or winding numbers); see, e.~g., ~\cite[Theorem 10.35]{Rud}. But as said, we want to avoid this concept here.

Let $R$ be a rectangle given by
\[
   R \de \left\{z\in\CC\colon a<\Re{z}<b, c<\Im{z}<d\right\}
\]
for real numbers $a<b$ and $c<d$. The boundary of $R$ consists of its four sides. Each side is a chord and thus easily parametrizated by the unit interval. For instance, the parametrization of the side $[a+c\ie,b+c\ie]$ could be the map
\[
   \gamma\colon [0,1] \ni t\mapsto (a+\ie c)(1-t)+(b+\ie c)t.
\]
Observe that when $t$ goes from $0$ to $1$, then $\gamma(t)$ goes from $a+c\ie$ to $b+c\ie$. The choice of parametrization the so-called \emph{orientation} is given on the side. The reader is invited to write down the parametrization of the boundary of $R$ with a positive (counter-clockwise) orientation.

For the proof and historic account on Cauchy's discovery (1831) of this milestone in complex function theory, consult \cite[\S 7.2]{Rem}. See also Chapter 6 of this book if someone is really uncomfortable with complex integration.

\begin{theorem}[Cauchy's integral formula for rectangles]
\label{thm:cauchy.rect}
Let $U\subseteq\CC$ be an open set and $\overline{R}\subset U$ be a closed, positively oriented rectangle. For every $f\in\holC{U}$ we have
\begin{equation}
\label{eq:cauchy.rect}
\frac{1}{2\pi\ie}\int_{\partial{R}} \frac{f(\zeta)}{\zeta-z}\dif{\zeta} = \left\{\begin{array}{ll}
                                                                                         f(z), & z\in R, \\
                                                                                         0, & z\in U\setminus\overline{R}.
                                                                                       \end{array}\right.
\end{equation}
\end{theorem}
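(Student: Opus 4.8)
The plan is to build the formula from the classical Cauchy--Goursat theorem for rectangles, i.e.\ from the fact that $\int_{\partial S}g(\zeta)\,\dif\zeta=0$ whenever $g$ is holomorphic on an open set containing a closed rectangle $\overline S$; this belongs to the assumed undergraduate background and is in any case weaker than what we are proving. With it, the case $z\in U\setminus\overline R$ is immediate: the integrand $\zeta\mapsto f(\zeta)/(\zeta-z)$ is holomorphic on the open set $U\setminus\{z\}\supseteq\overline R$, so its integral over $\partial R$ vanishes. All the content sits in the case $z\in R$.

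For $z\in R$ I would split the integrand as
\[
   \frac{f(\zeta)}{\zeta-z}=\frac{f(\zeta)-f(z)}{\zeta-z}+f(z)\cdot\frac{1}{\zeta-z}
\]
and treat the two terms separately. For the first, set $g(\zeta)\de\bigl(f(\zeta)-f(z)\bigr)/(\zeta-z)$ on $U\setminus\{z\}$ and $g(z)\de f'(z)$; complex differentiability of $f$ at $z$ makes $g$ continuous on $U$, and $g$ is holomorphic on $U\setminus\{z\}$. The claim is $\int_{\partial R}g(\zeta)\,\dif\zeta=0$, which I would get by a shrinking argument: choose a small closed square $\overline Q\subset R$ with centre $z$ and sides parallel to those of $R$; prolonging the four sides of $Q$ subdivides $\overline R$ into nine closed rectangles, the central one being $\overline Q$ and the other eight having closures that do not contain $z$. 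Applying Cauchy--Goursat to those eight and summing---interior edges cancel in pairs, the outer edges reconstitute $\partial R$, and the four edges bordering $Q$ assemble $-\partial Q$---gives $\int_{\partial R}g=\int_{\partial Q}g$. Since $g$ is continuous on the compact set $\overline R$ it is bounded there by some $M$, so by the length estimate $\bigl|\int_{\partial Q}g\bigr|\le M\cdot(\text{perimeter of }Q)\to0$ as $Q$ shrinks; as the left-hand side is independent of $Q$, it must be $0$.

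For the second term it remains to show $\frac{1}{2\pi\ie}\int_{\partial R}\dif\zeta/(\zeta-z)=1$, and this is the one place with an actual computation. The same nine-cell subdivision (now with integrand $1/(\zeta-z)$, holomorphic off $z$) reduces the integral over $\partial R$ to the one over $\partial Q$, and the translation $w=\zeta-z$ turns it into $\int_{\partial Q_0}\dif w/w$ for a square $Q_0$ centred at the origin. The rotation $w\mapsto\ie w$ maps $Q_0$ onto itself, carries each side to the next, and leaves $\dif w/w$ invariant, so the four sides contribute equally; on the side lying in the right half-plane the principal branch of the logarithm is a primitive of $1/w$, and that side contributes $\operatorname{Log}(a+\ie a)-\operatorname{Log}(a-\ie a)=\ie\pi/2$, whence $\int_{\partial Q_0}\dif w/w=4\cdot\ie\pi/2=2\pi\ie$. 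Combining the two terms, $\frac{1}{2\pi\ie}\int_{\partial R}f(\zeta)/(\zeta-z)\,\dif\zeta=0+f(z)\cdot1=f(z)$.

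The main obstacle, I expect, is not conceptual but the careful bookkeeping in the subdivision step: checking that the eight auxiliary rectangles genuinely have closures disjoint from $z$ (so Cauchy--Goursat applies to them) and tracking orientations so that the interior edges cancel and $\partial R$ and $-\partial Q$ emerge with the right signs. The evaluation $\int_{\partial Q_0}\dif w/w=2\pi\ie$ via local logarithm branches is the single step that cannot be routed around winding numbers, a concept the paper deliberately avoids.
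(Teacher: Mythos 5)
Your argument is correct, but there is nothing in the paper to compare it with: the paper deliberately treats Theorem \ref{thm:cauchy.rect} as known background and refers the reader to Remmert \cite[\S 7.2]{Rem} for its proof, so your write-up is a self-contained supplement rather than an alternative to an in-paper proof. Your route is the classical one and it holds together: Goursat's theorem for rectangles settles the case $z\in U\setminus\overline{R}$ immediately; for $z\in R$ the split $\frac{f(\zeta)}{\zeta-z}=\frac{f(\zeta)-f(z)}{\zeta-z}+\frac{f(z)}{\zeta-z}$, the nine-cell subdivision reducing $\int_{\partial R}$ to $\int_{\partial Q}$, and the length estimate $\bigl|\int_{\partial Q}g\bigr|\le M\cdot\mathrm{perim}(Q)\to 0$ are all sound, and it is a nice point that you apply Goursat only to the eight closed rectangles whose closures miss $z$, so no strengthened Goursat theorem (with an exceptional point) is needed. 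Two details you rightly flag do check out: the orientation bookkeeping (interior edges cancel, the edges bordering $Q$ appear with the orientation opposite to the positive one, yielding $\int_{\partial R}g=\int_{\partial Q}g$) is the same cancellation-of-shared-edges mechanism the paper itself uses in the proof of Theorem \ref{thm:cauchy.compact}, and the rotation trick for $\int_{\partial Q_0}\dif{w}/w=2\pi\ie$ genuinely requires $Q$ to be a square with centre $z$, which you arranged; the one-side computation $\operatorname{Log}(a+\ie a)-\operatorname{Log}(a-\ie a)=\ie\pi/2$ is valid because that side stays in the right half-plane where the principal logarithm is a primitive of $1/w$. So the proposal is a complete and elementary proof, consistent in spirit with the paper's avoidance of winding numbers.
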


Let us investigate what happens if we stick together two rectangles $\overline{R}_1$ and $\overline{R}_2$ along two sides of equal length and sum integrals over these two rectangles? The most common answer would probably be that we get the integral over ``one big'' rectangle $\overline{R}_{12}$, since the two sides ``have eliminated'' each other. This really works, which can be formally deduced by writing down integrals for all the sides of rectangles. The formula \eqref{eq:cauchy.rect} over $\overline{R}_{12}$ is also true for every point $z_0\in\overline{R}_1\cap\overline{R}_2\cap R_{12}$. Take a sequence $\{z_n\}_{n=1}^\infty\subset R_1$ with the limit point $z_0$. For an arbitrary small number $\varepsilon>0$ there exists $N\in\NN$, such that $|z_n-z_0|<\varepsilon$ for $n>N$. Also, positive numbers $M_1$ and $M_2$ exist, such that $|f(\zeta)|<M_1$ and $|(\zeta-z_n)(\zeta-z_0)|>M_2$ for every $\zeta\in\partial R_{12}$. For $n>N$ we have
\begin{flalign*}
\left|f(z_n)-\int_{\partial R_{12}} \frac{f(\zeta)}{\zeta-z_0}\dif{\zeta}\right| &=
\left|\int_{\partial R_{12}}\frac{f(\zeta)}{\zeta-z_n}\dif{\zeta} - \int_{\partial R_{12}} \frac{f(\zeta)}{\zeta-z_0}\dif{\zeta}\right| \\
&=\left|z_n-z_0\right|\cdot\left|\int_{\partial R_{12}} \frac{f(\zeta)}{(\zeta-z_n)(\zeta-z_0)}\dif{\zeta}\right|<\frac{OM_1}{M_2}\varepsilon
\end{flalign*}
where $O$ is the length of $\partial R_{12}$, the constant, of course. Because $f(z_n)$ tends to $f(z_0)$ the conclusion follows.

Given an arbitrary compact subset $K$ of $U$, the idea is to construct an open subset $P$ of $U$ which contains this compact set and formula \eqref{eq:cauchy.rect} is correct there. This set $P$ will be a union of a finite number of squares, forming ``polygonal sets'' (see Figure \ref{fig:cauchy}). Construction with squares enables us to use Theorem \ref{thm:cauchy.rect}.

\begin{theorem}
\label{thm:cauchy.compact}
Let $U\subseteq\CC$ be an open set and $K\subset U$ a compact subset. Then there exists an open set $P$ such that $K\subset P\subset \overline{P} \subset U$ and
\begin{equation}
\label{ena:cauchy}
\frac{1}{2\pi\ie}\int_{\partial P} \frac{f(\zeta)}{\zeta-z}\dif{\zeta} = \left\{\begin{array}{ll}
                                                                                         f(z), & z\in P, \\
                                                                                         0, & z\in U\setminus\overline{P}
                                                                                       \end{array}\right.
\end{equation}
for every $f\in\holC{U}$.
\end{theorem}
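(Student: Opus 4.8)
The plan is to construct the open set $P$ explicitly as a finite union of small closed squares from a fixed grid, and then to assemble the global formula \eqref{ena:cauchy} from the rectangle formula \eqref{eq:cauchy.rect} applied to each square, using the cancellation principle for shared sides described just before the statement. First I would introduce, for a mesh size $\delta>0$, the grid of closed axis-parallel squares $Q_{j,k}=[j\delta,(j+1)\delta]\times[k\delta,(k+1)\delta]$ with $j,k\in\ZZ$. Since $K$ is compact and $U$ is open, the distance $d(K,\CC\setminus U)$ is a positive number (or $U=\CC$, in which case take anything), so for $\delta$ small enough every grid square that meets $K$ is contained together with its neighbours in $U$; more precisely, choosing $\delta<\tfrac{1}{3}d(K,\CC\setminus U)$ guarantees that if $Q_{j,k}\cap K\neq\emptyset$ then $Q_{j,k}\subset U$. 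Let $\mathcal{S}$ be the (finite) collection of all grid squares meeting $K$, and set $P\de\inter\bigl(\bigcup_{Q\in\mathcal{S}}Q\bigr)$. Then $K\subset P$, because each point of $K$ lies in some $Q\in\mathcal{S}$ and, if it lies on a shared edge or corner, the adjacent squares also meet $K$ so a neighbourhood is covered; and $\overline{P}=\bigcup_{Q\in\mathcal{S}}Q\subset U$.

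The next step is the integral identity. The topological boundary $\partial P$ consists precisely of those edges of squares in $\mathcal{S}$ that are \emph{not} shared with another square of $\mathcal{S}$; orient each such edge positively with respect to the square it belongs to. I would then argue that for any point $z_0\notin\partial P$ which does not lie on any grid line, summing \eqref{eq:cauchy.rect} over all $Q\in\mathcal{S}$ gives
\[
   \sum_{Q\in\mathcal{S}}\frac{1}{2\pi\ie}\int_{\partial Q}\frac{f(\zeta)}{\zeta-z}\dif\zeta
   = \frac{1}{2\pi\ie}\int_{\partial P}\frac{f(\zeta)}{\zeta-z}\dif\zeta,
\]
since every interior shared edge is traversed once in each direction and the contributions cancel. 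On the left, Theorem \ref{thm:cauchy.rect} says the term for $Q$ equals $f(z)$ if $z\in\inter Q$ and $0$ if $z\notin Q$; so the sum is $f(z)$ when $z\in P$ lies in the interior of some square (hence exactly one), and $0$ when $z\in U\setminus\overline{P}$. To remove the restriction that $z$ avoid the grid lines inside $P$, I would invoke exactly the limiting argument spelled out in the paragraph preceding the theorem: a point $z_0\in P$ on a grid line is a limit of points $z_n\in P$ off the grid lines, and the estimate given there (bounding $f$ on the compact set $\partial P$ and bounding $(\zeta-z_n)(\zeta-z_0)$ away from $0$, using that $\partial P$ stays a fixed positive distance from any compact subset of $P$) shows the Cauchy integral is continuous in $z$ on $P$, so the value $f(z_0)$ persists. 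For $z\in U\setminus\overline{P}$ no such issue arises since such $z$ is automatically off all relevant edges, or one repeats the same continuity remark.

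I expect the main obstacle to be the bookkeeping around $\partial P$: one must be careful that $P$, defined as the interior of a union of closed squares, really has $\partial P$ equal to the union of the unshared edges (there are no ``slit'' pathologies because squares meet full-edge-to-full-edge on the grid), that the orientations are globally consistent, and that $\overline{P}$ equals the closed union rather than something larger. A clean way to handle the cancellation is to write each $\int_{\partial Q}$ as a sum over its four oriented edges and observe that an interior edge $e$ shared by $Q$ and $Q'$ appears as $+e$ from one and $-e$ from the other; this is the rigorous version of ``the two sides have eliminated each other.'' The only other mild point is checking $K\subset P$ (openness), handled by the mesh-size choice as above. Everything else — applying \eqref{eq:cauchy.rect} squarewise and the $\varepsilon$-estimate for points on grid lines — is routine given the tools already established.
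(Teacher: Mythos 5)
Your proposal is correct and follows essentially the same route as the paper's proof: a lattice of small closed squares covering $K$ inside $U$, cancellation of shared edges so that the sum of the squarewise rectangle integrals becomes the integral over $\partial P$, and the continuity estimate from the paragraph preceding the theorem to handle points of $P$ lying on grid lines. Your explicit choice of $\mathcal{S}$ as all grid squares meeting $K$ and the check that $K\subset\inter\bigl(\bigcup_{Q\in\mathcal{S}}Q\bigr)$ merely make precise a step the paper leaves implicit.
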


\begin{proof}
The analytic machinery for the proof depends on Theorem \ref{thm:cauchy.rect} and the ``continuity argument'' following it. The rest is point-set topology.

Because $K$ is a compact set in $U$, there exists $\varepsilon>0$ such that for every $z\in K$ the closed disc with the center in $z$ and the radius $\varepsilon$ is contained in $U$. Take a lattice, formed by parallels to real and imaginary axes, with the width and height $\varepsilon\sqrt{2}/2$. The lattice divides the plane into an infinite number of closed squares with diagonal $\varepsilon$. Because $K$ is compact, a finite set of such squares exists, say $\{Q_1,\ldots,Q_M\}$, such that $K$ is contained in the interior of set $\widetilde{P}$, defined by
\[
   \widetilde{P}\de\bigcup_{i=1}^M Q_i \subset U.
\]
The set $\widetilde{P}$ is closed in $U$ since it is a finite union of closed subsets of $U$. For each square choose those sides, if any, with the following property: for every point of the side its every neighbourhood in $U$ intersects $\widetilde{P}$ and $U\setminus\widetilde{P}$. Let $\mathscr{B}$ be a set containing those sides. Of course, $\mathscr{B}$ is not empty. By definition, union of elements of $\mathscr{B}$, denoted by $B$, is a boundary of $\widetilde{P}$ in $U$. Define $P\de\widetilde{P} \setminus B$. Then $P$ is nonempty and open, since we have a disjoint union
\[
   \widetilde{P}=\inter{\widetilde{P}}\sqcup B
\]
between the interior of $\widetilde{P}$ and $B$. By definition, $\partial P = B$.

Every square $Q_i$ equip with positive orientation. From the construction it follows
\[
   \sum_{i=1}^M \int_{\partial Q_i} \frac{f(\zeta)}{\zeta-z}\dif{\zeta} = \int_{\partial P} \frac{f(\zeta)}{\zeta-z}\dif{\zeta} \;\; \textrm{for every} \;\; z\in\bigcup_{i=1}^M \inter{Q_i}.
\]
But we already know that this is true for arbitrary $z\in P$.
\end{proof}

\begin{figure}[h!]
\centering
\begin{tikzpicture}[scale=2.3]

\tikzset{
    set arrow inside/.code={\pgfqkeys{/tikz/arrow inside}{#1}},
    set arrow inside={end/.initial=>, opt/.initial=},
    /pgf/decoration/Mark/.style={
        mark/.expanded=at position #1 with
        {
            \noexpand\arrow[\pgfkeysvalueof{/tikz/arrow inside/opt}]{\pgfkeysvalueof{/tikz/arrow inside/end}}
        }
    },
    arrow inside/.style 2 args={
        set arrow inside={#1},
        postaction={
            decorate,decoration={
                markings,Mark/.list={#2}
            }
        }
    },
}

\fill [gray!20] plot [smooth cycle] coordinates {(0,0) (1,1) (3,1) (5,0) (2,-1)};

\fill [gray!50]  (1.35,0) -- (2.62,0.95) -- (3.22,0.59) -- (2.52,-0.68) -- cycle;
\draw [black] (1.35,0) -- (2.62,0.95) -- (3.22,0.59) -- (2.52,-0.68) -- cycle;

\fill [gray!50]  (3.48,0) -- (4,0.39) -- (4.47,0.28) -- (4.3,-0.18) -- (3.95,-0.28) -- cycle;
\draw [black]  (3.48,0) -- (4,0.39) -- (4.47,0.28) -- (4.3,-0.18) -- (3.95,-0.28) -- cycle;

\fill [gray!20]  (1.68,0) -- (2.6,0.75) -- (3.1,0.6) -- (2.5,-0.6) -- cycle;
\draw [black] (1.68,0) -- (2.6,0.75) -- (3.1,0.6) -- (2.5,-0.6) -- cycle;

\fill [white] plot [smooth cycle] coordinates {(2,0) (2.7,0.5) (2.5,-0.2)};
\draw [black,dashed] plot [smooth cycle] coordinates {(0,0) (1,1) (3,1) (5,0) (2,-1)};
\draw [black,dashed] plot [smooth cycle] coordinates {(2,0) (2.7,0.5) (2.5,-0.2)};

\draw[dotted] (1.28,0) -- (1.82,0);
\draw[dotted] (1.37,-0.09) -- (1.91,-0.09);
\draw[dotted] (1.55,-0.18) -- (2,-0.18);
\draw[dotted] (1.73,-0.27) -- (2.18,-0.27);
\draw[dotted] (1.91,-0.36) -- (2.27,-0.36);
\draw[dotted] (2,-0.45) -- (2.36,-0.45);
\draw[dotted] (2.18,-0.54) -- (2.72,-0.54);
\draw[dotted] (2.36,-0.63) -- (2.63,-0.63);
\draw[dotted] (2.54,-0.45) -- (2.72,-0.45);
\draw[dotted] (2.54,-0.36) -- (2.81,-0.36);
\draw[dotted] (2.63,-0.27) -- (2.81,-0.27);
\draw[dotted] (2.63,-0.18) -- (2.9,-0.18);
\draw[dotted] (2.72,-0.09) -- (2.9,-0.09);
\draw[dotted] (2.72,0) -- (2.99,0);
\draw[dotted] (1.37,0.09) -- (1.82,0.09);
\draw[dotted] (2.81,0.09) -- (2.99,0.09);
\draw[dotted] (1.46,0.18) -- (2,0.18);
\draw[dotted] (2.81,0.18) -- (3.08,0.18);
\draw[dotted] (1.64,0.27) -- (2.09,0.27);
\draw[dotted] (2.9,0.27) -- (3.08,0.27);
\draw[dotted] (1.73,0.36) -- (2.18,0.36);
\draw[dotted] (2.9,0.36) -- (3.17,0.36);
\draw[dotted] (1.82,0.45) -- (2.27,0.45);
\draw[dotted] (2,0.54) -- (2.45,0.54);
\draw[dotted] (2.99,0.45) -- (3.26,0.45);
\draw[dotted] (2.99,0.54) -- (3.26,0.54);
\draw[dotted] (2,0.63) -- (2.54,0.63);
\draw[dotted] (2.9,0.63) -- (3.26,0.63);
\draw[dotted] (2.18,0.72) -- (3.08,0.72);
\draw[dotted] (2.36,0.81) -- (3.08,0.81);
\draw[dotted] (2.45,0.9) -- (2.81,0.9);

\draw[dotted] (1.37,0.09) -- (1.37,-0.09);
\draw[dotted] (1.46,0.18) -- (1.46,-0.18);
\draw[dotted] (1.55,0.27) -- (1.55,-0.18);
\draw[dotted] (1.64,0.27) -- (1.64,-0.27);
\draw[dotted] (1.73,0.36) -- (1.73,-0.27);
\draw[dotted] (1.82,0.45) -- (1.82,-0.36);
\draw[dotted] (1.91,0.54) -- (1.91,0.09);
\draw[dotted] (1.91,-0.09) -- (1.91,-0.36);
\draw[dotted] (2,0.54) -- (2,0.18);
\draw[dotted] (2,-0.18) -- (2,-0.45);
\draw[dotted] (2.09,0.72) -- (2.09,0.27);
\draw[dotted] (2.09,-0.18) -- (2.09,-0.54);
\draw[dotted] (2.18,0.72) -- (2.18,0.36);
\draw[dotted] (2.18,-0.27) -- (2.18,-0.54);
\draw[dotted] (2.27,0.81) -- (2.27,0.45);
\draw[dotted] (2.27,-0.36) -- (2.27,-0.63);
\draw[dotted] (2.36,0.81) -- (2.36,0.45);
\draw[dotted] (2.36,-0.45) -- (2.36,-0.63);
\draw[dotted] (2.45,0.9) -- (2.45,0.54);
\draw[dotted] (2.45,-0.45) -- (2.45,-0.72);
\draw[dotted] (2.54,0.99) -- (2.54,0.63);
\draw[dotted] (2.54,-0.45) -- (2.54,-0.72);
\draw[dotted] (2.63,0.99) -- (2.63,0.63);
\draw[dotted] (2.63,-0.27) -- (2.63,-0.63);
\draw[dotted] (2.72,0.99) -- (2.72,0.63);
\draw[dotted] (2.72,-0.09) -- (2.72,-0.45);
\draw[dotted] (2.81,0.9) -- (2.81,0.63);
\draw[dotted] (2.81,0.09) -- (2.81,-0.27);
\draw[dotted] (2.9,0.9) -- (2.9,0.63);
\draw[dotted] (2.9,0.27) -- (2.9,-0.09);
\draw[dotted] (2.99,0.81) -- (2.99,0.09);
\draw[dotted] (3.08,0.72) -- (3.08,0.27);
\draw[dotted] (3.17,0.72) -- (3.17,0.27);

\draw[dotted] (3.44,0) -- (4.43,0);
\draw[dotted] (3.53,0.09) -- (4.43,0.09);
\draw[dotted] (3.53,-0.09) -- (4.43,-0.09);
\draw[dotted] (3.71,-0.18) -- (4.34,-0.18);
\draw[dotted] (3.89,-0.27) -- (4.07,-0.27);
\draw[dotted] (3.62,0.18) -- (4.52,0.18);
\draw[dotted] (3.8,0.27) -- (4.52,0.27);
\draw[dotted] (3.89,0.36) -- (4.25,0.36);

\draw[dotted] (3.53,0.09) -- (3.53,-0.09);
\draw[dotted] (3.62,0.18) -- (3.62,-0.18);
\draw[dotted] (3.71,0.27) -- (3.71,-0.18);
\draw[dotted] (3.8,0.27) -- (3.8,-0.27);
\draw[dotted] (3.89,0.36) -- (3.89,-0.27);
\draw[dotted] (3.98,0.45) -- (3.98,-0.36);
\draw[dotted] (4.07,0.45) -- (4.07,-0.27);
\draw[dotted] (4.16,0.45) -- (4.16,-0.27);
\draw[dotted] (4.25,0.36) -- (4.25,-0.27);
\draw[dotted] (4.34,0.36) -- (4.34,-0.18);
\draw[dotted] (4.43,0.36) -- (4.43,0.09);

\draw[line width=1.1pt] (3.44,0.09) -- (3.44,-0.09) -- (3.53,-0.09) -- (3.53,-0.18) -- (3.71,-0.18) -- (3.71,-0.27) -- (3.89,-0.27) -- (3.89,-0.36) -- (4.07,-0.36) -- (4.07,-0.27) -- (4.34,-0.27) -- (4.34,-0.18) -- (4.43,-0.18) -- (4.43,0.09) -- (4.52,0.09) -- (4.52,0.36) -- (4.25,0.36) -- (4.25,0.45) -- (3.89,0.45) -- (3.89,0.36) -- (3.8,0.36) -- (3.8,0.27) -- (3.62,0.27) -- (3.62,0.18) -- (3.53,0.18) -- (3.53,0.09) -- cycle [arrow inside={end=stealth,opt={black,scale=1}}{0.03,0.37,0.57,0.74}];

\draw [line width=1.1pt] (1.28,0.09) -- (1.28,-0.09) -- (1.46,-0.09) -- (1.46,-0.18) -- (1.55,-0.18) -- (1.55,-0.27) -- (1.73,-0.27) -- (1.73,-0.36) -- (1.91,-0.36) -- (1.91,-0.45) -- (2,-0.45) -- (2,-0.54) -- (2.18,-0.54) -- (2.18,-0.63) -- (2.36,-0.63) -- (2.36,-0.72) -- (2.63,-0.72) -- (2.63,-0.54) -- (2.72,-0.54) -- (2.72,-0.36) -- (2.81,-0.36) -- (2.81,-0.27) -- (2.9,-0.27) -- (2.9,-0.09) -- (2.99,-0.09) -- (2.99,0.09) -- (3.08,0.09) -- (3.08,0.27) -- (3.17,0.27) -- (3.17,0.45) -- (3.26,0.45) -- (3.26,0.72) -- (3.08,0.72) -- (3.08,0.81) -- (2.9,0.81) -- (2.9,0.9) -- (2.81,0.9) -- (2.81,0.99) -- (2.45,0.99) -- (2.45,0.9) -- (2.36,0.9) -- (2.36,0.81) -- (2.27,0.81) -- (2.27,0.72) -- (2.09,0.72) -- (2.09,0.63) -- (2,0.63) -- (2,0.54) -- (1.91,0.54) -- (1.91,0.45) -- (1.73,0.45) -- (1.73,0.36) -- (1.64,0.36) -- (1.64,0.27) -- (1.55,0.27) -- (1.55,0.18) -- (1.37,0.18) -- (1.37,0.09) -- cycle [arrow inside={end=stealth,opt={black,scale=1}}{0.02,0.276,0.555,0.7}];

\draw [line width=1.1pt] (1.82,0.09) -- (2,0.09) -- (2,0.18) -- (2.09,0.18) -- (2.09,0.27) -- (2.18,0.27) -- (2.18,0.36) -- (2.27,0.36) -- (2.27,0.45) -- (2.45,0.45) -- (2.45,0.54) -- (2.54,0.54) -- (2.54,0.63) -- (2.9,0.63) -- (2.9,0.54) -- (2.99,0.54) -- (2.99,0.45) -- (2.9,0.45) -- (2.9,0.27) -- (2.81,0.27) -- (2.81,0.09) -- (2.72,0.09) -- (2.72,-0.09) -- (2.63,-0.09) -- (2.63,-0.27) -- (2.54,-0.27) -- (2.54,-0.45) -- (2.36,-0.45) -- (2.36,-0.36) -- (2.27,-0.36) -- (2.27,-0.27) -- (2.09,-0.27) -- (2.09,-0.18) -- (2,-0.18) -- (2,-0.09) -- (1.91,-0.09) -- (1.91,0) -- (1.82,0) -- cycle [arrow inside={end=stealth,opt={black,scale=1}}{0.03,0.32,0.53,0.75}];

\fill [white] plot [smooth cycle] coordinates {(0.3,0) (0.5,0.6) (0.7,0.5) (0.65,-0.3) (0.4,-0.2)};
\draw [black,dashed] plot [smooth cycle] coordinates {(0.3,0) (0.5,0.6) (0.7,0.5) (0.65,-0.3) (0.4,-0.2)};

\begin{scriptsize}
\draw (1.55,-0.02) node {$K_1$};
\draw (3.98,-0.02) node {$K_2$};
\draw (4.7,0.2) node {$\partial P_2$};
\draw (2.8,-0.65) node {$\partial P_1$};
\draw (1,0.8) node {$U$};
\end{scriptsize}
\end{tikzpicture}
\caption{Open set $U\subseteq\CC$ with compact set $K=K_1\cup K_2$ and open set $P=P_1\cup P_2$ containing $K$ and having boundary $\partial P_1\cup\partial P_2$. Little dotted squares illustrate how the set $P$ was constructed, see the proof of Theorem \ref{thm:cauchy.compact}.}
\label{fig:cauchy}
\end{figure}

The set $P$ from Theorem \ref{thm:cauchy.compact} is far from being unique. Assume that we have two such sets $P_1$ and $P_2$. Given a holomorphic $f$ on $U$, Theorem \ref{thm:cauchy.compact} guarantees the equality of integrals around $\partial P_1$ and $\partial P_2$ for values in $P_1\cap P_2$. But in the next section, we are confronted by the following problem: \emph{Assume that $f$ is defined and holomorphic only in the open set $U\setminus K$. Are integrals the same?} In this case, the integral \eqref{ena:cauchy} is also well-defined and represents a holomorphic function in $P$, but possibly different from $f$ because, on the contrary, with this method the extension of $f$ through a compact set would always exists. But this is in contradiction with the consequence of Weierstrass' theorem, see Exercise \ref{ex:WML}.

Equality of integrals is easy to obtain in the case of two rectangles as sets $P_1$ and $P_2$. The reader is invited to work out the proof. In the same lines we could produce a proof in the general case since sets are formed by rectangles. But rigorous proof, which is presented below, seems to be somehow technically difficult, but still quite elementary and natural.

In order to produce the proof, consider the compact subset
\[
   \widetilde{K}\de\overline{P_1\cup P_2\setminus P_1\cap P_2} \subset U\setminus K.
\]
By Theorem \ref{thm:cauchy.compact}, there exists an open set $P_3\subset\CC$ such that $\widetilde{K}\subset P_3\subset\overline{P}_3\subset U\setminus K$ and
\[
   f(z) = \frac{1}{2\pi\ie}\int_{\partial P_3} \frac{f(\zeta)}{\zeta-z}\dif{\zeta}
\]
for every $z\in P_3$. Now take $z$ from a nonempty open set $\left(P_1\cap P_2\right)\setminus\overline{P}_3$. We have
\begin{flalign*}
\int_{\partial P_1 - \partial P_2} \frac{f(\eta)}{\eta-z}\dif{\eta} &= \int_{\partial P_1 - \partial P_2} \int_{\partial P_3} \frac{f(\zeta)}{2\pi\ie(\zeta-\eta)(\eta-z)}\dif{\zeta}\dif{\eta} \\
&= - \int_{\partial P_3} \left(\frac{1}{2\pi\ie}\int_{\partial P_1 - \partial P_2} \left(\frac{1}{\eta-\zeta}-\frac{1}{\eta-z}\right)\dif{\eta}\right) \frac{f(\zeta)}{\zeta-z}\dif{\zeta}.
\end{flalign*}
Changing the order of integration is justified by the fact that the integrand is a continuous function and we integrate around a compact set\footnote{This is a weak (and most used) version of the celebrated Fubini's theorem. The reader can find it, stated in the sense of measure theory, in \cite[Chapter 8]{Rud}.}. The values $\zeta$ are from the boundary $\partial P_3$. It follows that $\zeta\in U\setminus\widetilde{K}$ and therefore $\zeta$ belongs to $U\setminus\overline{P_1\cup P_2}$ or $P_1\cap P_2$. In both cases, the inner integral in the above expression equals zero by Theorem \ref{thm:cauchy.compact}. We showed that
\begin{equation}
\label{eq:cauchy.equiv}
\frac{1}{2\pi\ie}\int_{\partial P_1} \frac{f(\zeta)}{\zeta-z}\dif{\zeta} = \frac{1}{2\pi\ie}\int_{\partial P_2} \frac{f(\zeta)}{\zeta-z}\dif{\zeta}
\end{equation}
for every $z\in\left(P_1\cap P_2\right)\setminus\overline{P}_3$. By the identity principle, the equality \eqref{eq:cauchy.equiv} holds for all $z\in P_1\cap P_2$.

\section{The proof}
\label{sec:proof}

In this section, we finally prove Theorem \ref{thm:gen.hart} and, consequently,  Hartogs' extension theorem. To begin with, let us review the assumptions of the theorem.

We have a domain $\Omega$ in $\CC^n$ where $n\geq2$, and a subset $A\subset\Omega$ such that $\Omega\setminus A$ is a domain. Furthermore, there exists a number $i\in\{1,\ldots,n\}$ such that:
\begin{enumerate}
\item $\pr{i}{\Omega}\setminus\pr{i}{A}$ is a nonempty open set\footnote{The openness is not fully guaranteed although $A$ is a closed subset of $\Omega$. This is because $\pi^i$ is not a closed map. A possible counter-example is provided by the set $A=\{(z_1,z_2)\in\CC^2\colon z_1z_2=1\}$ in $\Omega=\CC^2$.},
\item $A_i(z')$ is a compact set for every $z'\in\pr{i}{A}$,
\end{enumerate}
where $A_i(z')$ is the fiber \eqref{eq:fiber} and $\pi^i$ is the projection where we omit the $i$th component. From now on, this number $i$ is fixed.

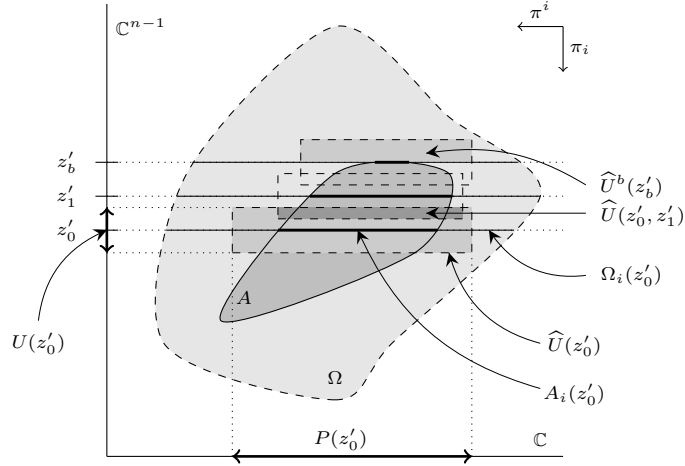
\begin{figure}[h!]
\centering
\begin{tikzpicture}[scale=1.5]
\draw (0,0) -- (4,0);
\draw (0,0) -- (0,4);
\fill [gray!20] plot [smooth cycle] coordinates {(0.5,1) (2,0.5) (2.5,1) (3.8,2.3) (3,3) (2,3.8) (0.7,2.5)};
\draw [dashed] plot [smooth cycle] coordinates {(0.5,1) (2,0.5) (2.5,1) (3.8,2.3) (3,3) (2,3.8) (0.7,2.5)};
\fill [gray!50] plot [smooth cycle] coordinates {(1,1.2) (2.7,1.8) (3,2.5) (2,2.5)};
\fill [gray!40] (1.7,2.6) -- (3.2,2.6) -- (3.2,2.8) -- (1.7,2.8) -- (1.7,2.6);
\fill [gray!40] (1.1,1.8) -- (3.2,1.8) -- (3.2,2.2) -- (1.1,2.2) -- (1.1,1.8);
\fill[gray!80] (1.5,2.1) -- (3.12,2.1) -- (3.12,2.2) -- (1.5,2.2) -- (1.5,2.1);
\draw plot [smooth cycle] coordinates {(1,1.2) (2.7,1.8) (3,2.5) (2,2.5)};

\draw (0.52,2) -- (3.57,2);
\draw[dotted] (0,2) -- (4,2);
\draw[line width=1.1pt] (1.5,2) -- (2.9,2);
\draw[line width=0.2pt,dashed] (1.1,1.8) -- (3.2,1.8) -- (3.2,2.2) -- (1.1,2.2) -- (1.1,1.8);

\draw (0.61,2.3) -- (3.8,2.3);
\draw[dotted] (0,2.3) -- (4,2.3);
\draw[line width=1.1pt] (1.78,2.3) -- (3.03,2.3);
\draw[line width=0.2pt,dashed] (1.5,2.1) -- (3.12,2.1) -- (3.12,2.5) -- (1.5,2.5) -- (1.5,2.1);

\draw (0.75,2.6) -- (3.6,2.6);
\draw[dotted] (0,2.6) -- (4,2.6);
\fill [semitransparent,gray!40] (1.7,2.6) -- (3.2,2.6) -- (3.2,2.8) -- (1.7,2.8) -- (1.7,2.6);
\draw[line width=1.1pt] (2.35,2.6) -- (2.65,2.6);
\draw[line width=0.2pt,dashed] (1.7,2.4) -- (3.2,2.4) -- (3.2,2.8) -- (1.7,2.8) -- (1.7,2.4);

\draw[->] (4,3.8) -- (3.6,3.8);
\draw[->] (4,3.8) -- (4,3.4);

\draw[decoration={markings,mark=at position 1 with
    {\arrow[scale=2,>=stealth]{>}}},postaction={decorate},line width=0.1pt] (3.8,0.6) to[bend left=25] (2.2,2);
\draw[decoration={markings,mark=at position 1 with
    {\arrow[scale=2,>=stealth]{>}}},postaction={decorate},line width=0.1pt] (3.8,1) to[bend left=30] (3,1.8);
\draw[decoration={markings,mark=at position 1 with
    {\arrow[scale=2,>=stealth]{>}}},postaction={decorate},line width=0.1pt] (-0.6,1.2) to[bend left=30] (0,2);
\draw[decoration={markings,mark=at position 1 with
    {\arrow[scale=2,>=stealth]{>}}},postaction={decorate},line width=0.1pt] (4.2,2.4) to[bend right=20] (2.8,2.68);
\draw[decoration={markings,mark=at position 1 with
    {\arrow[scale=2,>=stealth]{>}}},postaction={decorate},line width=0.1pt] (4.2,1.6) to[bend left=32] (3.35,2);
\draw[decoration={markings,mark=at position 1 with
    {\arrow[scale=2,>=stealth]{>}}},postaction={decorate},line width=0.1pt] (4.2,2.15) to (2.8,2.15);

\draw[<->,line width=1pt] (0,1.8) -- (0,2.2);
\draw[<->,line width=1pt] (1.1,0) -- (3.2,0);
\draw[dotted] (1.1,0) -- (1.1,1.8);
\draw[dotted] (3.2,0) -- (3.2,1.8);
\draw[dotted] (0,1.8) -- (1.1,1.8);
\draw[dotted] (0,2.2) -- (1.1,2.2);

\draw (-0.1,2) -- (0.1,2);
\draw (-0.1,2.3) -- (0.1,2.3);
\draw (-0.1,2.6) -- (0.1,2.6);

\begin{scriptsize}
\draw (3.8,0.15) node[] {$\CC$};
\draw (0.3,3.8) node[] {$\CC^{n-1}$};
\draw (3.8,3.95) node[] {$\pi^i$};
\draw (4.15,3.6) node[] {$\pi_i$};
\draw (1.2,1.4) node[] {$A$};
\draw (2,0.7) node[] {$\Omega$};
\draw (4.1,0.55) node[] {$A_i(z_0')$};
\draw (4.6,1.6) node[] {$\Omega_i(z_0')$};
\draw (4.7,2.15) node[] {$\widehat{U}(z_0',z_1')$};
\draw (-0.35,2) node[] {$z_0'$};
\draw (-0.35,2.3) node[] {$z_1'$};
\draw (-0.35,2.6) node[] {$z_b'$};
\draw (2.05,0.15) node[] {$P(z_0')$};
\draw (-0.6,1) node[] {$U(z_0')$};
\draw (4.1,1) node[] {$\widehat{U}(z_0')$};
\draw (4.6,2.4) node[] {$\widehat{U}^{b}(z_b')$};
\end{scriptsize}
\end{tikzpicture}
\caption{Schematic components in the proof of Hartogs' extension theorem.}
\label{fig:proof}
\end{figure}

Taking the following considerations into account, the reader is invited to examine Figure \ref{fig:proof}. Choose an arbitrary point $z_0'\in\pr{i}{A}$. By the second assumption above, the set $A_i(z_0')$ is compact and therefore $\prd{i}{A_i(z_0')}$ is also compact subset of $\CC$. Theorem \ref{thm:cauchy.compact} guarantees the existence of an open set $P(z_0')\subset\CC$, such that
\[
   \prd{i}{A_i(z_0')}\subset P(z_0')\subset\overline{P}(z_0')\subset\prd{i}{\Omega_i(z_0')}.
\]
If this open set is also good for small perturbations of $z_0'$ in $\CC^{n-1}$ such that the above equation is correct, then we can define an open set
\begin{equation}
\label{eq:open.set}
\widehat{U}(z_0') \de \left\{z\in\CC^n \colon \pr{i}{z}\in U(z_0'),\prd{i}{z}\in P(z_0')\right\}\subset\Omega
\end{equation}
with a neighbourhood $U(z_0')$ of $z_0'$ in $\CC^{n-1}$ where this perturbations are allowed. The precise statement and its existence is assured by the next lemma, which has the same purpose as Sobieszek's lemma \cite[Lemma 2]{Sob}.

\begin{lemma}
\label{lem:ind}
For every $z_0'\in\pr{i}{A}$ there exist an open set $P(z_0')\subset\CC$ and a neighborhood $U(z_0')\subset\CC^{n-1}$ of $z_0'$ such that
\begin{equation}
\label{eq:lemma}
\prd{i}{A_i(z')}\subset P(z_0') \subset \overline{P(z_0')} \subset\prd{i}{\Omega_i(z')}
\end{equation}
for every $z'\in U(z_0')$.
\end{lemma}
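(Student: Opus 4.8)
\emph{Plan and set-up.} The plan is to fix $z_0'\in\pr{i}{A}$, to build $P(z_0')$ from Cauchy's integral formula for compact sets applied inside the complex line over $z_0'$, and then to show that both inclusions in \eqref{eq:lemma} persist for all $z'$ in a sufficiently small polydisc around $z_0'$, which will serve as $U(z_0')$. Throughout, write $\Omega_i(z')\de\{z\in\Omega\colon\pr{i}{z}=z'\}$ for $z'\in\CC^{n-1}$, and let $\iota_{z'}\colon\CC\to\CC^n$ denote the affine injection placing its argument in the $i$th coordinate slot and the coordinates of $z'$ in the remaining $n-1$ slots; then $\prd{i}{\Omega_i(z')}=\{w\in\CC\colon\iota_{z'}(w)\in\Omega\}$ is open in $\CC$, and $\prd{i}{A_i(z')}=\{w\in\CC\colon\iota_{z'}(w)\in A\}$. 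By assumption $A_i(z_0')$ is compact, so $\prd{i}{A_i(z_0')}$ is a compact subset of the open set $\prd{i}{\Omega_i(z_0')}\subseteq\CC$, and Theorem~\ref{thm:cauchy.compact} furnishes an open set $P(z_0')$ with
\[
   \prd{i}{A_i(z_0')}\subset P(z_0')\subset\overline{P(z_0')}\subset\prd{i}{\Omega_i(z_0')};
\]
being a finite union of squares, $\overline{P(z_0')}$ is compact. We fix this set and look for $U(z_0')$.

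\emph{The inclusion $\overline{P(z_0')}\subset\prd{i}{\Omega_i(z')}$.} The image $\iota_{z_0'}\!\bigl(\overline{P(z_0')}\bigr)$ is a compact subset of the open set $\Omega$, hence lies at a positive distance, say $2\delta$, from $\CC^n\setminus\Omega$. If $\|z'-z_0'\|_\infty<\delta$, then for any $w\in\overline{P(z_0')}$ the points $\iota_{z'}(w)$ and $\iota_{z_0'}(w)$ differ only in the non-$i$ coordinates, and there precisely by $z'-z_0'$, so $\|\iota_{z'}(w)-\iota_{z_0'}(w)\|_\infty<\delta$ and hence $\iota_{z'}(w)\in\Omega$. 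Thus $\overline{P(z_0')}\subset\prd{i}{\Omega_i(z')}$ for every $z'\in\De^{n-1}(z_0',\delta)$.

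\emph{The inclusion $\prd{i}{A_i(z')}\subset P(z_0')$, and the main obstacle.} It remains to shrink $\delta$ so that no point of $A$ lying over $\De^{n-1}(z_0',\delta)$ has its $i$th coordinate outside $P(z_0')$. I would argue by contradiction: were this to fail, there would be $z_k'\to z_0'$ and $a_k\in A$ with $\pr{i}{a_k}=z_k'$ and $\prd{i}{a_k}\in\CC\setminus P(z_0')$; if some subsequence of $(a_k)$ converged to a point $a_*\in\Omega$, then $a_*\in A$ (as $A$ is closed in $\Omega$), $\pr{i}{a_*}=z_0'$, whence $\prd{i}{a_*}\in\prd{i}{A_i(z_0')}\subset P(z_0')$, contradicting that $\CC\setminus P(z_0')$ is closed and contains every $\prd{i}{a_k}$. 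So everything hinges on confining the $a_k$ to a compact subset of $\Omega$, and I expect this to be the delicate point of the whole lemma. The natural remedy is to localize near the compact fiber $A_i(z_0')$: pick a bounded open $W\subset\CC$ with $\overline{P(z_0')}\subset W$ and $\overline{W}\subset\prd{i}{\Omega_i(z_0')}$, and (shrinking $\delta$ by applying the previous paragraph to $\overline{W}$ in place of $\overline{P(z_0')}$) form the bounded open box $\widehat{W}\de\{z\in\CC^n\colon\pr{i}{z}\in\De^{n-1}(z_0',\delta),\ \prd{i}{z}\in W\}$, whose closure is a compact subset of $\Omega$ and which contains $A_i(z_0')$.

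\emph{Completing the inner inclusion.} Now $A\cap\partial\widehat{W}$ is a compact subset of $\Omega$ disjoint from the complex line $(\pi^i)^{-1}(z_0')$ (a point of it over $z_0'$ would have its $i$th coordinate in $\partial W$, which misses $\prd{i}{A_i(z_0')}$), so $z_0'\notin\pr{i}{A\cap\partial\widehat{W}}$, and the latter is compact; choose $\delta'\in(0,\delta]$ with $\De^{n-1}(z_0',\delta')$ disjoint from $\pr{i}{A\cap\partial\widehat{W}}$. Then over this polydisc no point of $A$ meets $\partial\widehat{W}$, so $A\cap(\pi^i)^{-1}\!\bigl(\De^{n-1}(z_0',\delta')\bigr)$ is the disjoint union of the two relatively open sets $A\cap\widehat{W}\cap(\pi^i)^{-1}(\cdots)$ and $A\cap\bigl(\CC^n\setminus\overline{\widehat{W}}\bigr)\cap(\pi^i)^{-1}(\cdots)$. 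Showing the second set empty — equivalently, that no ``branch'' of $A$ escapes the box $\widehat{W}$ as $z'$ leaves $z_0'$ — confines $A_i(z')$ to $\widehat{W}$, hence the relevant $a_k$ to the compact set $\overline{\widehat{W}}\subset\Omega$, and the contradiction argument above then forces $\prd{i}{A_i(z')}\subset P(z_0')$; thus $U(z_0')\de\De^{n-1}(z_0',\delta')$ works. This escaping-branch step is the main obstacle: it is immediate when $A$ is compact, and when the fibers $A_i(z')$ do not depend on $z'$ (as for Hartogs' figures, with $i=1$), but in general it is precisely where the hypothesis that $A$ is closed in $\Omega$ has to be exploited.
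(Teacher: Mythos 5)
Your set-up and the easy half are fine: taking $P(z_0')$ from Theorem \ref{thm:cauchy.compact}, the distance argument showing $\overline{P(z_0')}\subset\prd{i}{\Omega_i(z')}$ for all $z'$ with $\|z'-z_0'\|_\infty<\delta$ is correct (the paper gets the same inclusion by a subsequence argument instead), and your conditional contradiction argument for the inner inclusion is sound \emph{provided} the bad points $a_k\in A$ can be confined to a compact subset of $\Omega$. But that is exactly what you do not prove: the inclusion $\prd{i}{A_i(z')}\subset P(z_0')$ is reduced to the claim that no point of $A_i(z')$, for $z'$ in a small polydisc around $z_0'$, escapes the box $\overline{\widehat{W}}$, and you leave this claim open. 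This is not a removable technicality; it is the entire content of the lemma. Since $A$ is only relatively closed in $\Omega$, closedness gives no control over sequences in $A$ drifting toward $\partial\Omega$ or to infinity, and nothing in hypotheses (1)--(2) visibly rules out fibers $A_i(z')$ over $z'$ arbitrarily close to $z_0'$ containing points far from the compact fiber $A_i(z_0')$. So, as submitted, the first inclusion of \eqref{eq:lemma} is unproved and the proposal is not a proof.

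For comparison, the paper proves the lemma in one stroke by contradiction: if no $U(z_0')$ works, it chooses $z_j'\to z_0'$ together with bad points $z_{1j}\in\prd{i}{A_i(z_j')}\setminus P(z_0')$ or $z_{2j}\in\overline{P}(z_0')\setminus\prd{i}{\Omega_i(z_j')}$, passes to subsequences, and contradicts openness of $P(z_0')$ (resp.\ of $\Omega$) once it is granted that $z_{1j}$ accumulates at some $z_1\in\prd{i}{A_i(z_0')}$ and $z_{2j}$ at some $z_2\in\overline{P}(z_0')$. The $z_{2j}$ half corresponds to your $\delta$-estimate. The $z_{1j}$ half is precisely the accumulation (upper semicontinuity of fibers) statement you are missing: it is immediate when $A$ is compact, as in Theorem \ref{thm:hartogs}, because then the points $(z_j',z_{1j})\in A$ have a subsequence converging \emph{in} $A$, so the limit lies over $z_0'$ and in $P(z_0')$; in the general setting of Theorem \ref{thm:gen.hart} this is the step that must be justified from the hypotheses, and the paper's own proof supplies it only by assertion. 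So you have correctly isolated the crux of the lemma — your last paragraph even explains why it is the crux — but to have a proof you must either establish the no-escaping-branch claim from the stated hypotheses or restrict to the situation (compact $A$, or fibers independent of $z'$ as for Hartogs figures) where it is automatic.
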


\begin{proof}
We are proving by contradiction. Assume that such $U(z_0')$ does not exist. Then for every neighborhood $V\subset\CC^{n-1}$ of $z_0'$ there exists $z'\in V$, such that $\prd{i}{A_i(z')}\not\subset P(z_0')$ or $\overline{P}(z_0')\not\subset\prd{i}{\Omega_i(z')}$. This means that there is a sequence $\{z_j'\}_{j=1}^\infty\subset\CC^{n-1}$ with the limit point $z_0'$ and the following property: for every $z_j'$ there exist $z_{1j}\in\prd{i}{A_i(z_j')}$ or $z_{2j}\in\overline{P}(z_0')$, such that $z_{1j}\notin P(z_0')$ or $z_{2j}\notin\prd{i}{\Omega_i(z_j')}$. Since the sets $\prd{i}{A_i(z_j')}$ and $\overline{P}(z_0')$ are compact for every $j\in\NN$, the sequences $\{z_{1j}\}$ and $\{z_{2j}\}$ have accumulation points in the latter sets. Therefore, we can assume that both sequences converge to $z_1\in\prd{i}{A_i(z_0')}\subset P(z_0')$ and $z_2\in\overline{P}(z_0')\subset\prd{i}{\Omega_i(z_0')}$, respectively. Since the set $P(z_0')$ is open and $\{z_{1j}\}\not\subset P(z_0')$, the limit point $z_1$ does not belong to $P(z_0')$. For the same reason, we must have $z_2\notin\prd{i}{\Omega_i(z_0')}$. We reached a contradiction since it has been assumed that such $U(z_0')$ is nonexistent.
\end{proof}

Thanks to Lemma \ref{lem:ind}, we know now for certain that such $\widehat{U}(z_0')$ exists. Before continuing with proof, let introduce an open set
\[
   \widehat{U}(z_1',z_2') \de \widehat{U}(z_1') \cap \widehat{U}(z_2')
\]
for arbitrary points $z_1',z_2'\in\pr{i}{A}$. This set is nonempty if and only if $U(z_1')$ and $U(z_2')$ have nonempty intersection and we can always choose such two points that this happens.

Let $f$ be a given holomorphic function on $\Omega\setminus A$. Again by Lemma \ref{lem:ind}, we can define the function $f_{z_0'}\colon \widehat{U}(z_0')\to\CC$ by
\begin{equation}
\label{eq:ext}
f_{z_0'}(z)\de \frac{1}{2\pi\ie}\int_{\partial P(z_0')} \frac{f\left(\pr{i}{z},\zeta\right)}{\zeta-\prd{i}{z}}\dif{\zeta}.
\end{equation}
The same argument as in Section \ref{sec:motivation} implies holomorphicity of $f_{z_0'}$ on $\widehat{U}(z_0')$. You can guess now that the thread of the proof is using functions \eqref{eq:ext} to achieve the following three assertions:
\begin{enumerate}
\item construction of a neighborhood $U$ of $A$ in $\Omega$ and,
\item construction of a holomorphic function on $U$ such that,
\item it coincides with $f$ on $U\setminus A$.
\end{enumerate}
It is evident that such a function will be an extension of $f$ to whole domain $\Omega$ and by the identity principle (Theorem \ref{thm:id.princ}) this extension is unique. Thus proving above assertions will provide the proof of Theorem \ref{thm:gen.hart}.

Neighborhood $U$ of $A$ consists of sets \eqref{eq:open.set}. Precisely, $U$ is a union of sets $\widehat{U}(z')$ for all $z'\in\pr{i}{A}$.

The second assertion will be complete if functions \eqref{eq:ext} have ``gluing property''. This means that $f_{z_1'}$ and $f_{z_2'}$ coincide on $\widehat{U}(z_1',z_2')$ for arbitrary points $z_1'$ and $z_2'$ from $\pr{i}{A}$. To see this, take arbitrary $z\in\widehat{U}(z_1',z_2')$ and define $z'\de\pr{i}{z}$. Then
\[
   \prd{i}{A_i(z')}\subset P(z_1') \cap P(z_2').
\]
We have a compact set, contained in two ``polygonal sets'', situation already discussed at the end of Section \ref{sec:cauchy}. By \eqref{eq:cauchy.equiv}, this means that $f_{z_1'}(z)=f_{z_2'}(z)$.

The third assertion is the most difficult one to show but it is the most important one, too. The reader is again invited to examine Figure \ref{fig:proof}. Choose arbitrary point $z_b'$ from the boundary of $\pr{i}{A}$ in $\pr{i}{\Omega}$ and define the open subset $\widehat{U}^b(z_b')$ of $\widehat{U}(z_b')$ by
\[
   \widehat{U}^b(z_b') \de \left\{z\in\CC^n \colon \pr{i}{z}\in U(z_b')\setminus\pr{i}{A},\prd{i}{z}\in P(z_b')\right\}.
\]
By the first assumption of Theorem \ref{thm:gen.hart}, this set is nonempty. Crucial point here is that this set \emph{is contained} in the complement of $A$ in $\Omega$. Remember that here we have function $f$, fully available to use Theorem \ref{thm:cauchy.compact} on it to obtain coincidence of $f$ and $f_{z_b'}$ on $\widehat{U}^b(z_b')$. By the identity principle, functions $f$ and $f_{z_b'}$ also coincide on $\widehat{U}(z_b')\setminus A$. The proof of the third assertion will be complete if we demonstrate this property for all points in $\pr{i}{A}$ rather than just for boundary points.

Take arbitrary $z'\in\pr{i}{A}$. Then there exists $z_b'$ from the boundary of $\pr{i}{A}$ in $\pr{i}{\Omega}$, and path $\gamma\colon[0,1]\to\pr{i}{A}$ with the initial point $z'$ and the end point $z_b'$. We now proceed as in the proof of the identity theorem, see Section \ref{sec:id.principle}. Choose points $z_1',\ldots,z_{k-1}'\in\gamma{\left([0,1]\right)}$ such that the set
\[
   \left\{U(z'=z_0'),U(z_1'),\ldots,U(z_{k-1}'),U(z_b'=z_k')\right\}
\]
covers this path. Then $\widehat{U}(z_j',z_{j+1}')\setminus A$ is for every $j\in\{0,\ldots,k-1\}$ a nonempty open subset of $\Omega\setminus A$. We know from the previous two paragraphs that
\[
   f \equiv f_{z_k'} \;\; \textrm{and} \;\; f_{z_k'} \equiv f_{z_{k-1}'} \;\; \textrm{on} \;\; \widehat{U}(z_{k-1}',z_{k}')\setminus A.
\]
By the identity principle, functions $f$ and $f_{z_{k-1}'}$ coincide on $\widehat{U}(z_{k-1}')\setminus A$. By continuing this process, we finally get to the point $z_0'=z'$ and the proof is complete.

\providecommand{\bysame}{\leavevmode\hbox to3em{\hrulefill}\thinspace}
\providecommand{\MR}{\relax\ifhmode\unskip\space\fi MR }
\providecommand{\MRhref}[2]{%
  \href{http://www.ams.org/mathscinet-getitem?mr=#1}{#2}
}
\providecommand{\href}[2]{#2}


\end{document}